\newtheorem{thm}{Theorem}
\newtheorem{theorem}[thm]{Theorem}
\newtheorem*{theorem*}{Theorem}
\newtheorem{lem}[thm]{Lemma}
\newtheorem{lemma}[thm]{Lemma}
\newtheorem{cor}[thm]{Corollary}
\newtheorem{prop}[thm]{Proposition}
\newtheorem{claim}[thm]{Claim}
\theoremstyle{definition}
\newtheorem{remark}[thm]{Remark}
\newtheorem{rem}[thm]{Remark}
\numberwithin{equation}{section}
\numberwithin{thm}{section}
\newcommand{\N}{\mathbb{N}}
\newcommand{\R}{\mathbb{R}}
\newcommand{\eps}{\varepsilon}
\newcommand{\IN}{\mathbb{N}}
\newcommand{\IP}{\mathbb{P}}
\newcommand{\IR}{\mathbb{R}}
\newcommand{\cB}{\mathcal{B}}
\newcommand{\cC}{\mathcal{C}}
\newcommand{\cL}{\mathcal{L}}
\newcommand{\cM}{\mathcal{M}}
\newcommand{\cN}{\mathcal{N}}
\newcommand{\cR}{\mathcal{R}}
\newcommand{\cX}{\mathcal{X}}
\DeclareMathOperator{\vdim}{dim_{\rm vc}}
\DeclareMathOperator{\vol}{vol}
\DeclareMathOperator{\dens}{dens}
\DeclareMathOperator{\conv}{conv}
\DeclareMathOperator{\disp}{disp}
\newcommand{\dd}{\,\mathrm{d}}
\newcommand{\Sph}{\mathbb{S}^{d}}
\newcommand{\Sp}{\mathbb{S}}
\title{Minimal dispersion on the sphere}
\author{Alexander E. Litvak\footnote{Department of Mathematical and Statistical Sciences, University of Alberta, Edmonton, AB, Canada, T6G 2G1; {\tt alitvak@ualberta.ca; tszczepa@ualberta.ca}}, Mathias Sonnleitner$^{*,}$\footnote{Current address: Institute of Financial Mathematics and Applied Number Theory, Johannes Kepler University Linz, Altenbergerstrasse 69, 4040 Linz, Austria; {\tt mathias.sonnleitner@jku.at}} and Tomasz Szczepanski$^*$ }
\date{}
\begin{document}

\maketitle

\abstract{
The minimal spherical cap dispersion $\disp_{\cC}(n,d)$ is the largest number $\varepsilon\in (0,1]$ such that,
for every $n$ points  on the $d$-dimensional Euclidean unit sphere $\Sp^d$, there exists a spherical cap with normalized area
$\varepsilon$ not containing any of these points. We study the behavior of $\disp_{\cC}(n,d)$ as $n$ and $d$ grow to infinity.
We develop connections to the problems of sphere covering and approximation of the Euclidean unit ball by inscribed polytopes.
Existing and new results are presented in a unified way. Upper bounds on $\disp_{\cC}(n,d)$ result from choosing the points
independently and uniformly at random and possibly adding some well-separated points to close large gaps. Moreover, we study
dispersion with respect to intersections of caps.

\noindent
	\\
	{\bf Keywords}. {dispersion, covering density, covering radius, largest empty cap}\\
	{\bf MSC 2020}. {Primary: 52B55, 52C17; Secondary: 52A23, 52C45}
}

\section{Introduction and results}

Given a point set $P=\{x_1,\dots,x_n\}$ on the $d$-dimensional Euclidean unit sphere $\Sp^d\subset \IR^{d+1}$, where $d\ge 1$, define its spherical cap dispersion by
\[
\disp_{\cC}(P)
:=\sup_{C\in \cC}\{\sigma(C)\colon C\cap \{x_1,\dots,x_n\}=\emptyset\},
\]
where $\cC$ denotes the family of spherical caps of $\Sp^d$ and $\sigma$ is the normalized Lebesgue measure on $\Sp^d$, which
we also call the spherical measure.  We define the minimal spherical cap dispersion by
\begin{equation} \label{eq:dispersion}
\disp_{\cC}(n,d)
:=\inf_{x_1,\dots,x_n\in \Sph} \disp_{\cC}(\{x_1,\dots,x_n\}).
\end{equation}
We are interested in the asymptotic behavior of $\disp_{\cC}(n,d)$ as $n$ and $d$ grow to infinity.

In the case of the unit cube $[0,1]^d$ equipped with the Lebesgue measure and the family of axis-parallel boxes,
the minimal dispersion was introduced in the context of uniform distribution theory by Hlawka \cite{Hla76} and
Niederreiter \cite{Nie83}. We refer to \cite{AL24,BC22,Lit21,LL22,TVV24,TVV25} and references therein for history and recent progress.

In the case of $\Sp^d$, the minimal dispersion has been studied for spherical slices (intersections of half-spheres) by Rote and Tichy \cite{RT95} and subsequently by Prochno and Rudolf \cite{PR24} for intersections of two spherical caps, that is for the family
\[
\cL
=\{C_1\cap C_2\colon C_1,C_2\in \cC\},
\]
which we refer to as spherical lenses. The corresponding notion of minimal lens dispersion is then given by
\begin{equation} \label{eq:dispersion-intersect}
\disp_{\cL}(n,d)
:=\inf_{x_1,\dots,x_n\in \Sph} \,\, \sup_{A\in \cL}\, \{\sigma(A)\colon A\cap \{x_1,\dots,x_n\}=\emptyset\}.
\end{equation}

The study of spherical dispersion is motivated by a question of Erd\H{o}s \cite[p.~54]{Erd64}
who asked for a lower bound on the minimal spherical cap discrepancy
\[
D(n,d)
:=\inf_{x_1,\dots,x_n\in \Sp^d}\sup_{C\in \cC} \Big| \frac{|\{i\colon x_i\in C\}|}{n} - \sigma(C)\Big|,
\]
where the supremum is taken over the system of spherical caps $\cC$ (see Blümlinger \cite{Blu91} for the spherical slice discrepancy).
A lower bound for $D(n,d)$ was given by Schmidt \cite{Sch69} and improved by Beck \cite{Bec84b}, who had provided an almost matching
upper bound in \cite{Bec84a}. It is a well-known open problem to determine the asymptotic behavior of $D(n,d)$ as $n\to\infty$ for $d\ge 2$.
Clearly, $D(n,d)\ge \disp_{\cC}(n,d)$ and, as we present below, the behavior of $\disp_{\cC}(n,d)$ is better understood than
that of $D(n,d)$. At the end of the introduction we will comment on $\disp_{\cL}(n,d)$.

Recall that a family $\cC$ of spherical caps coincides with the family of geodesic balls
\[
B(x,\varphi)=\{y\in \Sp^d\colon \varrho(x,y)\le \varphi\},\qquad x\in \Sp^d,\varphi\in [0,\pi],
\]
where $\varrho(x,y)=\arccos (\langle x,y\rangle)$ denotes the geodesic distance.
Note that the dispersion remains unchanged whether we consider open or closed caps,
and we work with the latter.  We start with the observation that the minimal spherical
cap dispersion is related to covering the sphere by caps. Define
the minimal spherical covering density of the sphere by
\[
  \dens(n,d) :=
   \inf\Big\{\sum_{i=1}^{n}\sigma(B(x_i,\varphi))\colon \bigcup_{i=1}^n B(x_i,\varphi)=
   \Sp^d, x_1,\dots,x_n\in \Sp^d, \varphi\in (0,\pi]\Big\}
\]
and the minimal geodesic covering radius  by
\[
\varphi(n)
:=\inf\Big\{\varphi>0\colon \exists x_1,\dots,x_n\in \Sp^d \text{ with }\bigcup_{i=1}^n B(x_i,\varphi)=\Sp^d\Big\}.
\]
These two parameters  have been intensively studied, see the books and surveys \cite{AS17,Bor04,CS99,Tot83, Tot64, Tot72,Rog64}.
We summarize the relations between them and dispersion  in the following lemma, which will be proved in
Section~\ref{subsec:disp-covering}.

\begin{lemma} \label{newlemma} Let $d, n$ be positive integers. Then
\begin{equation} \label{eq:disp-covering-density}
n\cdot\disp_{\cC}(n,d)
=\dens(n,d)
=n\cdot V(\varphi(n)),
\end{equation}
where
  $V(\varphi)=\sigma(B(x,\varphi))$ denotes the normalized volume of a cap with geodesic
  radius $\varphi\in [0,\pi]$  centered at $x\in \Sp^d$.
\end{lemma}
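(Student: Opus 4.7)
The plan is to unwind both equalities through the pointwise distance to the configuration, which identifies the largest empty cap with the geodesic covering radius.

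Fix a configuration $P=\{x_1,\dots,x_n\}\subset\Sp^d$ and set $r_P(y):=\min_{i\le n}\varrho(y,x_i)$. The first observation to record is that a closed cap $B(y,\varphi)$ misses $P$ if and only if $\varphi<r_P(y)$, so using that $V$ is continuous and strictly increasing on $[0,\pi]$,
\[
\disp_{\cC}(P)=\sup_{y\in\Sp^d}\sup_{\varphi<r_P(y)}V(\varphi)=\sup_{y\in\Sp^d}V(r_P(y))=V(\varphi^*(P)),
\]
where $\varphi^*(P):=\sup_{y\in\Sp^d}r_P(y)$. The identity $y\in\bigcup_i B(x_i,\varphi)\Leftrightarrow r_P(y)\le\varphi$ shows that $\varphi^*(P)$ is precisely the smallest geodesic radius for which $P$ covers $\Sp^d$, and is attained since $\Sp^d$ is compact and $r_P$ continuous.

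Taking the infimum over $P$ and using continuity and monotonicity of $V$ to interchange $V$ with $\inf$, I obtain
\[
\disp_{\cC}(n,d)=V\bigl(\inf_{P}\varphi^*(P)\bigr)=V(\varphi(n)),
\]
because $\inf_P\varphi^*(P)=\varphi(n)$ by the definition of the covering radius. Multiplying by $n$ yields the rightmost equality in \eqref{eq:disp-covering-density}.

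For the leftmost equality, I would use that in the definition of $\dens(n,d)$ every cap shares the common radius $\varphi$, so $\sum_{i=1}^n\sigma(B(x_i,\varphi))=nV(\varphi)$. Minimising over admissible tuples $(x_1,\dots,x_n,\varphi)$ thus reduces to minimising $V(\varphi)$ subject to the covering condition $\bigcup_i B(x_i,\varphi)=\Sp^d$, which by definition of $\varphi(n)$ and monotonicity of $V$ gives $\dens(n,d)=nV(\varphi(n))$. The argument is essentially a direct unwrapping of definitions; the only delicate point is the interchange of $\inf$ with $V$ and the passage from the strict inequality $\varphi<r_P(y)$ to the value $V(r_P(y))$, both handled by continuity of $V$.
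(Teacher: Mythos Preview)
Your proof is correct and follows essentially the same route as the paper's: both establish the pointwise identity $\disp_{\cC}(P)=V(\varphi(P))$ via the largest-empty-cap/covering-radius duality, pass to the infimum over $n$-point sets using continuity and monotonicity of $V$, and then read off $\dens(n,d)=nV(\varphi(n))$ directly from the definition. Your introduction of $r_P(y)=\min_i\varrho(y,x_i)$ is a slightly more explicit packaging of the same idea, but there is no substantive difference in strategy.
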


In the following, we derive statements about behavior of the spherical cap dispersion.
It is easy to see that $\disp_{\cC}(n,d)\ge \frac{1}{n}$ (which also follows from $\dens(n,d)\ge 1$ and (\ref{eq:disp-covering-density})).  When $n\leq d+1$ the affine hull of $n$ points spans an affine subspace of $\IR^{d+1}$ with dimension at most $d$. Moreover, any cap of area at least $\frac{1}{2}$ intersects a given pair of antipodal points. Therefore we have
\begin{equation}\label{triv-disp}
\disp_{\cC}(1,d)=1 \quad \quad \mbox{ and } \quad \quad
\disp_{\cC}(2,d)=\cdots=\disp_{\cC}(d+1,d)=\frac{1}{2}.
\end{equation}
For $d+2\le n\le 2d+2$ optimizers are expected to be regular structures, see \cite[Conjecture~1.3]{BW03} and \cite[Chapter~6]{Bor04}.
More precisely, for $n=d+1+k$, $1\leq k\leq d+1$, split $\R^{d+1}$ into an orthogonal sum of $k$ subspaces $E_i$ of the dimensions
$\lceil (d+1)/k\rceil$ and   $\lfloor (d+1)/k\rfloor$ and in each $E_i$ take a regular simplex inscribed into the Euclidean ball.
Then take convex hull of such simplices.  In particular,
for $n=d+2$ (so $k=1$)  a regular simplex inscribed into $\Sp^d$ gives the optimal covering with radius
$\varphi(d+2)=\arccos \frac{1}{d+1}$ and for $n=2d+2$ (so $k=d+1$) the cross-polytope inscribed into $\Sp^d$ yields a covering with
$n$ caps of radius $\arccos \frac{1}{\sqrt{d+1}}$, which is conjectured to be optimal (see also \cite[Problem~4]{DLM+00}). We note also
that such arrangements were used to prove sharpness of the lower bound on the distance between convex polytopes with few vertices
and centrally-symmetric bodies \cite{GL2002}.
It follows from volume bounds given in Section~\ref{subsec:volume} (see Remark~\ref{remone}) that
\begin{equation} \label{eq:simplex-disp}
\frac{1}{2}-\disp_{\cC}(d+2,d)\sim \frac{1}{\sqrt{2\pi d}},
\end{equation}
where for sequences $(a_d)_d$ and $(b_d)_d$ of positive numbers we write $a_d\sim b_d$ for $\lim_{d\to\infty}\frac{a_d}{b_d}=1$. Moreover, if $n=2d+2$, then
\begin{equation} \label{eq:cross-disp}
	\disp_{\cC}(2d+2,d)\le \frac{1+o(1)}{\sqrt{2\pi}}\int_1^{\infty}e^{-x^2/2}\dd x  \qquad \text{as }d\to\infty
\end{equation}
(see Lemma~\ref{lem:vol-gauss} and Remark~\ref{remtwo} below).

\begin{rem}
For completeness we remark that in the case of $d=1$ the sphere $\Sp^1$ is a one-dimensional torus and $\disp_{\cC}(n,1)=\frac{1}{n}$ for every $n\in\IN$. Thus, we focus on $d\ge 2$. We refer to \cite{AL24,Lit21,LL22,Rud18,Ull18} about dispersion with respect to periodic axis-parallel boxes on a torus of dimension $d\ge 2$.
\end{rem}

If $n$ is sufficiently large compared to $d$, then local approximation of $\Sp^d$ by $\IR^d$ suggests that $\dens(n,d)$ is related to the minimal covering density of $\IR^d$ by equally sized balls. The latter can be defined by
\begin{equation} \label{eq:min-covering-density}
\vartheta_d
=\inf_{\mathcal{B}}\lim_{R\to\infty}\sum_{B\in \mathcal{B}} \frac{\vol_d(B\cap B^d(0,R))}{\vol_d(B^d(0,R))},
\end{equation}
where the infimum is taken over any covering $\mathcal{B}=\{B^d(x_1,1),B^d(x_2,1),\dots\}$ of $\IR^d$
by Euclidean balls of unit radius. Indeed, in the next theorem  we show that $\lim_{n\to\infty}\dens(n,d)=\vartheta_d$
(we state all our results in the equivalent notion of dispersion, see Lemma~\ref{newlemma}).

\begin{theorem}\label{thm:disp-covering} Let $d\geq 2$. Then  the minimal dispersion satisfies
\[
\lim_{n\to\infty}n\cdot \disp_{\cC}(n,d)
=\vartheta_d.
\]
\end{theorem}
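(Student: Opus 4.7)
By Lemma~\ref{newlemma}, the theorem reduces to showing $nV(\varphi(n))\to\vartheta_d$ as $n\to\infty$. Since $nV(\varphi(n))\ge 1$ and $V(\rho)\to 0$ as $\rho\to 0$, we have $\varphi(n)\to 0$, so we are in the regime of small caps. The core geometric fact is that the exponential map $\exp_x\colon T_x\Sp^d\cong\IR^d\to\Sp^d$ sends $B^d(0,\rho)$ onto $B(x,\rho)$ with bi-Lipschitz distortion $1+O(\rho^2)$, so small spherical caps are nearly Euclidean balls of the same radius and in particular $V(\rho)\sim c_d\rho^d$ for a positive constant $c_d$. This reduces the small-scale covering problem on $\Sp^d$ to its Euclidean analogue, which is what $\vartheta_d$ measures.

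\emph{Upper bound $\limsup nV(\varphi(n))\le\vartheta_d$.} Fix $\eps>0$ and start with a periodic covering $\cB_0$ of $\IR^d$ by unit balls of density $\le\vartheta_d+\eps$, guaranteed by the definition of $\vartheta_d$. For small $r>0$, scale $\cB_0$ by $r$ to obtain a covering of $\IR^d$ by $r$-balls. To transfer this to $\Sp^d$, choose $\delta$ with $r\ll\delta\ll 1$ and a maximal $\delta$-separated set $y_1,\dots,y_M\in\Sp^d$, so that $\Sp^d=\bigcup_j B(y_j,\delta)$. On each $B(y_j,\delta)$, push the $r$-scaled Euclidean cover forward via $\exp_{y_j}$ restricted to $B^d(0,\delta)\subset T_{y_j}\Sp^d$, obtaining a cover of $B(y_j,\delta)$ by spherical caps of radius $r(1+O(\delta^2))$; the number of such caps is at most $(\vartheta_d+\eps)V(\delta)/V(r)\cdot(1+o(1))$. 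Summing over $j$ (with bounded overlap of the coarse caps) yields a cover of $\Sp^d$ by $n(r)$ caps of radius $\rho(r)=r(1+O(\delta^2))$ with $n(r)V(\rho(r))\le\vartheta_d+2\eps$. Since $\varphi(n)$ is non-increasing in $n$ and $n(r)$ changes by a factor $1+o(1)$ as $r$ decreases, this bound extends from the produced values of $n$ to all sufficiently large integers, giving $\limsup_n nV(\varphi(n))\le\vartheta_d+2\eps$. Let $\eps\to 0$.

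\emph{Lower bound $\liminf nV(\varphi(n))\ge\vartheta_d$.} Fix $R$ large. Given an optimal cover of $\Sp^d$ by $n$ caps $B(x_i,\rho)$ with $\rho=\varphi(n)$, Fubini gives
\[
  \int_{\Sp^d}\bigl|\{i\colon x_i\in B(p,(R+1)\rho)\}\bigr|\dd\sigma(p)=nV((R+1)\rho),
\]
so there exists $p\in\Sp^d$ with $|\{i\colon x_i\in B(p,(R+1)\rho)\}|\le nV((R+1)\rho)$; the corresponding caps cover $B(p,R\rho)$ by the triangle inequality. Pulling back via $\exp_p^{-1}$ and rescaling by $1/\rho$ yields a cover of $B^d(0,R)\subset\IR^d$ by at most $nV((R+1)\rho)$ balls of radius $1+o(1)$. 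Using $V((R+1)\rho)/V(\rho)\to(R+1)^d$ as $\rho\to 0$, the normalized density of this Euclidean cover is at most $nV(\rho)(1+1/R)^d(1+o(1))$. The final input is the classical fact that the minimum density of a covering of $B^d(0,R)$ by unit balls tends to $\vartheta_d$ as $R\to\infty$. Combining, $\vartheta_d\le nV(\varphi(n))(1+1/R)^d(1+o(1))$, and sending first $n\to\infty$ and then $R\to\infty$ finishes the lower bound.

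The main obstacle is the last ingredient of the lower bound: a covering of $B^d(0,R)$ alone can have density strictly less than $\vartheta_d$ for small $R$ (e.g.\ $R=1$ needs only one unit ball), so one must argue that these boundary savings vanish as $R\to\infty$. The standard route is to splice any near-minimum cover of $B^d(0,R)$ into a density-$(\vartheta_d+\eps)$ periodic cover of $\IR^d$ along an annular shell of width $O(1)$ near $\partial B^d(0,R)$, introducing only $O(R^{d-1})$ additional unit balls; since the resulting global cover satisfies the defining density bound $\ge\vartheta_d$, the same bound holds for $B^d(0,R)$ up to a $O(1/R)$ error. Propagating these errors uniformly through the simultaneous limits $\rho\to 0$ and $R\to\infty$ is the technical heart of the proof.
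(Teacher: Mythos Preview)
Your approach is genuinely different from the paper's. The paper does not transfer between $\Sp^d$ and $\IR^d$ directly; instead it invokes the relation $\disp_{\cC}(n,d)=V(\arccos(1-\delta_H(n,d)))$ from Section~\ref{subsec:approximation} and then quotes the known asymptotic (due to Schneider) $\delta_H(n,d)\sim\frac12\big(\frac{\vol_d(\Sp^d)}{\vol_d(B^d)}\vartheta_d\big)^{2/d}n^{-2/d}$ for the Hausdorff approximation of $B^{d+1}$ by inscribed polytopes with $n$ vertices. Combining this with $\arccos(1-x)\sim\sqrt{2x}$ and $V(\varphi)\sim\frac{\vol_d(B^d)}{\vol_d(\Sp^d)}\varphi^d$ gives the theorem in a few lines. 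So the hard geometric transfer you are attempting is outsourced to the cited literature.

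Your lower bound is sound: the averaging trick to locate a good patch, the pullback via $\exp_p^{-1}$ with distortion $1+O((R\rho)^2)$, and the final appeal to the fact that minimal covering density of $B^d(0,R)$ by unit balls tends to $\vartheta_d$ all fit together correctly, and you have correctly identified the last step as the nontrivial input.

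The upper bound, however, has a real gap. A maximal $\delta$-separated set $\{y_j\}$ gives coarse caps $B(y_j,\delta)$ whose total measure $\sum_j V(\delta)=M\,V(\delta)$ is bounded by a dimensional constant of order $2^d$, not by $1+o(1)$; ``bounded overlap'' is exactly the problem, not a reassurance. Summing the local counts $(\vartheta_d+\eps)\,V(\delta)/V(r)$ over $j$ therefore yields $n(r)V(\rho(r))\le M\,V(\delta)\,(\vartheta_d+\eps)(1+o(1))$, which is $C_d(\vartheta_d+\eps)$, not $\vartheta_d+2\eps$. To repair this you must replace the overlapping caps by a genuine partition (e.g.\ the Voronoi cells $V_j$ of the $y_j$), retain from each local cover only those $r$-caps meeting $V_j$, and show that the boundary excess --- caps that intersect $V_j$ but spill outside --- contributes a factor $1+O(r/\delta)$. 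This is precisely the same $O(1/R)$ boundary bookkeeping you flagged for the lower bound, now needed on both sides.
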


Instead of proving Theorem~\ref{thm:disp-covering} directly, we deduce it from the known asymptotic for the best
approximation of the Euclidean unit ball $B^{d+1}$ by  inscribed polytopes with $n$ vertices in
Hausdorff distance (see e.g., \cite{Sch81}).

The minimal covering density of $\IR^d$ is known to satisfy $\vartheta_2=\frac{2\pi}{3\sqrt{3}}$ in the case $d=2$ which is attained by the hexagonal lattice (see \cite{Ker39}) and, for $d\ge 3$,
\begin{equation} \label{eq:density-bounds}
\frac{d}{e\sqrt{e}}\sim \tau_d \le \vartheta_d \le d\ln d+d\ln\ln d+5d.
\end{equation}
The lower bound in \eqref{eq:density-bounds} is due to Coxeter-Few-Rogers \cite{CFR59} and the upper is due to Rogers \cite{Rog57},
which was improved by Fejes T\'oth \cite{Tot09} by replacing the summand $5d$ with $d$ and by  Dumer \cite{Dum07} who obtained the bound $(1/2 + o(1))d\ln d$. The constant $\tau_d$ can be written explicitly in terms of the interior angle of a $d$-dimensional regular simplex.

\begin{rem}
	The upper bound in \eqref{eq:density-bounds} also holds for coverings by other convex shapes, while no nontrivial lower bound for shapes other than the ball is known so far, see Fejes T\'oth \cite[p.~36]{GOT17}. We refer to \cite{BGL+25} for recent results developments in this direction.
\end{rem}

It follows from Theorem~\ref{thm:disp-covering} and the bounds in \eqref{eq:density-bounds} that
\begin{equation}
d\lesssim \lim_{n\to\infty}n\cdot \disp_{\cC}(n,d)\lesssim d\ln d,
\end{equation}
where for sequences $(a_d)_d$ and $(b_d)_d$ of positive numbers we write $a_d\lesssim b_d$ if for some $C>0$ and all $d$ one has
$a_d \le C\,b_d$. Note that compared to the minimal dispersion on the cube with respect to axis-parallel boxes (see Bukh and Chao
\cite{BC22}, who showed $cd$ for the lower bound
and $Cd^2 \ln d$ for the upper bound) we  have only a logarithmic  gap between the lower and upper bounds in the case of the spherical cap dispersion.

Regarding the upper bound in \eqref{eq:density-bounds}, Erd\H{o}s and Rogers \cite{ER61} have shown that there exists
a covering of $\IR^d$ by unit balls such that no point is covered by more than $e(d\ln d + d\ln \ln d+5d)$ balls.
In the case of the sphere, Böröczky Jr. and Wintsche \cite{BW03} proved that for any $\varphi<\frac{\pi}{2}$ there is a covering
of $\Sp^d$ by caps of geodesic radius $\varphi$ such that no point of $\Sp^d$ is covered by more than $400d\ln d$ caps.
From  this  one can derive  the following bound.

\begin{prop} \label{pro:400}
Let $d, n\ge 2$.  Then the minimal dispersion satisfies
\begin{equation*}
n\cdot\disp_{\cC}(n,d)\le 400 d\ln d.
\end{equation*}
\end{prop}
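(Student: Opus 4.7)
\medskip

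\noindent\textbf{Proof plan.} By Lemma~\ref{newlemma}, it suffices to exhibit, for every $n\ge 2$, a covering of $\Sp^d$ by $n$ equal caps of total normalized volume at most $400 d\ln d$; equivalently, to show that $\dens(n,d)\le 400d\ln d$. The plan is to parametrize the Böröczky–Wintsche covering by its common cap radius $\varphi$ and to tune $\varphi$ to match the prescribed number of caps $n$, keeping in mind that the cited theorem is only available for $\varphi<\pi/2$.

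First, I would dispose of the range $n\le 800 d\ln d$ by purely elementary means. Indeed, adding a point cannot enlarge any empty cap, so $\disp_{\cC}(\cdot,d)$ is non-increasing in~$n$; combining this with \eqref{triv-disp} yields $\disp_{\cC}(n,d)\le \disp_{\cC}(2,d)=\tfrac{1}{2}$ for every $n\ge 2$, hence
\[
n\cdot\disp_{\cC}(n,d)\;\le\;\tfrac{n}{2}\;\le\;400\,d\ln d
\]
in this range. So it remains to treat $n>800 d\ln d$.

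For such $n$, I would choose $\varphi_n\in(0,\pi/2)$ so that $V(\varphi_n)=\tfrac{400\,d\ln d}{n}$; this is possible since the right-hand side is strictly less than $\tfrac12=V(\pi/2)$, and $V$ is continuous and strictly increasing on $[0,\pi]$. Applying the Böröczky–Wintsche theorem at this radius produces a covering $\Sp^d=\bigcup_{i=1}^{m}B(y_i,\varphi_n)$ with point-multiplicity at most $400d\ln d$. Integrating the multiplicity inequality $\sum_{i=1}^{m}\mathbf 1_{B(y_i,\varphi_n)}\le 400d\ln d$ against $\sigma$ gives
\[
m\,V(\varphi_n)\;=\;\int_{\Sp^d}\sum_{i=1}^{m}\mathbf 1_{B(y_i,\varphi_n)}\,\dd\sigma\;\le\;400\,d\ln d,
\]
so $m\le n$ by the choice of $\varphi_n$. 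One then pads the covering to exactly $n$ caps of radius $\varphi_n$ by repeating, say, the cap $B(y_1,\varphi_n)$ a total of $n-m$ extra times; the result is still a covering, and
\[
\dens(n,d)\;\le\;\sum_{i=1}^{n}\sigma(B(\cdot,\varphi_n))\;=\;n\,V(\varphi_n)\;=\;400\,d\ln d.
\]
Combined with Lemma~\ref{newlemma}, this gives the desired bound.

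The only subtle point in this scheme is the constraint $\varphi<\pi/2$ built into the Böröczky–Wintsche theorem, which forces the case split at $n=800 d\ln d$; once that is handled by the monotonicity trick above, the rest is a one-line integration of the multiplicity bound plus the harmless padding of the covering to exactly $n$ caps.
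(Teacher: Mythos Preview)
Your proof is correct and follows essentially the same route as the paper: both use the B\"or\"oczky--Wintsche multiplicity bound, integrate it to get a density bound, and then tune the cap radius to hit the target~$n$, handling small~$n$ separately via the trivial estimate $\disp_{\cC}(n,d)\le\tfrac12$. The only cosmetic difference is that the paper packages the ``choose $\varphi$ and pad to $n$ caps'' step into a separate lemma (Lemma~\ref{newdens}) applied with the constant function $D(\varphi)=400d\ln d$, whereas you carry out that computation inline.
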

In fact, in \cite{BW03} (Remark~5.1 and the end of its proof, see also \cite[Theorem~2.2]{Nas16}) the authors
proved also that there is a covering of the sphere with  the density bounded
by  $d\ln d+d\ln \ln d+5d$ (note, this is the same
 bound as in \eqref{eq:density-bounds}).
 This leads to the following estimate.

\begin{prop}\label{pro:lnlnd}
 Let $d, n\geq 2$.  Then the minimal dispersion satisfies
\[
n\cdot \disp_{\cC}(n,d)
\le d\ln d+d\ln \ln d+5d.
\]
\end{prop}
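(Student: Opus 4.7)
The plan is to feed the Böröczky--Wintsche covering construction from \cite{BW03} (quoted in the paragraph preceding the proposition) into Lemma~\ref{newlemma} and convert it into a dispersion bound. Since Lemma~\ref{newlemma} gives $n\cdot\disp_{\cC}(n,d)=nV(\varphi(n))$, it is enough to produce, for each $n\ge 2$, a covering of $\Sp^d$ by at most $n$ caps of some common radius $\varphi$ with $nV(\varphi)\le C_d:=d\ln d+d\ln\ln d+5d$: any deficit in the number of caps can be absorbed by adding redundant copies of the same radius, and the resulting inequality $\varphi(n)\le \varphi$ propagates to $nV(\varphi(n))\le nV(\varphi)\le C_d$. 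The Böröczky--Wintsche input is that, for every sufficiently small $\varphi\in(0,\pi/2)$, there exists a covering of $\Sp^d$ by some number $N(\varphi)$ of caps of radius $\varphi$ with $N(\varphi)V(\varphi)\le C_d$.

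Given $n\ge 2$, I would split into two cases. If $n\le 2C_d$, then the trivial bound $\disp_{\cC}(n,d)\le 1/2$ from~\eqref{triv-disp} already gives
\[
n\cdot\disp_{\cC}(n,d)\le n/2\le C_d.
\]
If instead $n>2C_d$, I would select $\varphi_n\in(0,\pi/2)$ by the equation $V(\varphi_n)=C_d/n<1/2$, which is possible since $V$ is continuous and strictly increasing on $[0,\pi]$ with $V(\pi/2)=1/2$. Applying the Böröczky--Wintsche construction at this radius yields a covering by $N(\varphi_n)\le C_d/V(\varphi_n)=n$ caps of radius $\varphi_n$, and padding with $n-N(\varphi_n)$ redundant caps produces a covering of $\Sp^d$ by exactly $n$ caps of radius $\varphi_n$. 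Hence $\varphi(n)\le\varphi_n$, and
\[
n\cdot\disp_{\cC}(n,d)=nV(\varphi(n))\le nV(\varphi_n)=C_d,
\]
as required.

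The only real obstacle is confirming that the Böröczky--Wintsche construction is available at the exact radius $\varphi_n$ prescribed by $n$, rather than only at a discrete sequence of radii determined by the construction itself. This uniformity in $\varphi$ is precisely what Remark~5.1 of \cite{BW03} and~\cite[Theorem~2.2]{Nas16} assert, so once that input is granted the argument reduces to the elementary case analysis above. It is worth noting that the same template (choose $\varphi_n$ by $nV(\varphi_n)=$ target density, invoke a covering construction at $\varphi_n$, pad with redundant caps) is also what drives Proposition~\ref{pro:400}, with the Böröczky--Wintsche multiplicity bound replaced by the density input $C_d$.
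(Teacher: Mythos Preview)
Your proof is correct and follows essentially the same approach as the paper: the paper packages the step ``choose $\varphi$ with $V(\varphi)=C_d/n$, apply the B\"or\"oczky--Wintsche covering at that radius, pad with redundant caps'' into a separate Lemma~\ref{newdens} (applied with the constant function $D(\varphi)=C_d$ on $(0,\pi/2]$), while you unroll that argument inline, but the substance and the case split at $n=2C_d$ are identical. One minor quibble: you initially write ``for every sufficiently small $\varphi$'' when stating the B\"or\"oczky--Wintsche input, but your argument needs it for all $\varphi\in(0,\pi/2)$ (since $\varphi_n$ can be close to $\pi/2$ when $n$ is just above $2C_d$); you correctly note in your final paragraph that this full range is exactly what \cite{BW03} provides, so the proof stands.
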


The bound in Proposition~\ref{pro:lnlnd} can be further improved as follows. Corollary 1.2 in \cite{BW03} states that for every $\varphi\in (0,\arccos\frac{1}{\sqrt{d+1}})$ there is a covering of $\Sp^d$ by at most
\begin{equation} \label{eq:cor-1.2}
C\,\frac{\cos\varphi}{\sin^d\varphi}\cdot d^{3/2}\ln(1+d\cos^2\varphi)
\end{equation}
geodesic balls of radius $\varphi$ with density bounded by $c \ln(1+d\cos^2\varphi)$, where $C, c>0$ are absolute constants.
From this we deduce the following bound which improves Proposition~\ref{pro:lnlnd} if $n$ is subexponential in $d$.

\begin{thm}\label{pro:lnln-upper}
Let $d\geq 2$. Then the minimal dispersion satisfies
\[
n\cdot \disp_{\cC}(n,d)\le C\, d\ln \ln \frac{2n}{d}\qquad  \text{for }\,\,\, n\ge 2d,
\]
where $C>1$ is an absolute constant.
\end{thm}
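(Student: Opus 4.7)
The plan is to construct a covering of $\Sp^d$ by $\le n$ geodesic balls of a carefully chosen radius $\varphi$ via the Böröczky--Wintsche estimate \eqref{eq:cor-1.2}, and convert this into a dispersion bound via Lemma~\ref{newlemma}. Concretely, if $N(\varphi)\le n$ caps of radius $\varphi$ cover $\Sp^d$, then $\varphi(n)\le\varphi$, and hence
\[
n\cdot \disp_{\cC}(n,d) \;=\; n\, V(\varphi(n)) \;\le\; n\, V(\varphi).
\]
It then suffices to make the right-hand side $\lesssim d\ln\ln(2n/d)$.

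I parameterize by $t:=d\cos^{2}\varphi$, which lies in $(1,d)$ under the BW03 constraint $\varphi\in(0,\arccos(1/\sqrt{d+1}))$. Since $\sin^{d}\varphi=(1-t/d)^{d/2}\ge e^{-t/2-O(t^{2}/d)}$, bound \eqref{eq:cor-1.2} becomes
\[
N(\varphi) \;\lesssim\; \sqrt{t}\cdot e^{t/2}\cdot d\cdot \ln(1+t)
\]
for $t\le d/2$. I then choose $t$ as the largest value for which this expression does not exceed $n$. Taking logarithms, the binding constraint is $t/2+O(\ln t)\le \ln(n/d)$, yielding $t\asymp\ln(2n/d)$ and $N(\varphi)\asymp n$.

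For the cap volume I invoke the Gaussian tail estimate $V(\varphi)\lesssim (2\pi t)^{-1/2}e^{-t/2}$, valid for $1\le t\ll d$ (this follows from the volume estimates in Section~\ref{subsec:volume} and reflects $V(\varphi)\approx \IP(Z\ge\sqrt{t})$ for a standard Gaussian $Z$). Substituting $N(\varphi)\asymp \sqrt{t}\,e^{t/2}\,d\,\ln(1+t)$,
\[
n\,V(\varphi) \;\lesssim\; \frac{n\,e^{-t/2}}{\sqrt{t}} \;\asymp\; \frac{N(\varphi)\,e^{-t/2}}{\sqrt{t}} \;\asymp\; d\ln(1+t) \;\lesssim\; d\ln\ln(2n/d),
\]
using $t\asymp\ln(2n/d)$ in the last step. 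This is the announced bound.

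The main technical issue lies in the boundary regimes. For $n$ close to $2d$, the chosen $t$ is a bounded constant, $V(\varphi)=\Theta(1)$, and the claim reduces to $n\lesssim d$, which holds trivially since $\ln\ln(2n/d)\ge \ln\ln 4>0$ absorbs the prefactor into $C$. For $n$ exponential in $d$ the chosen $t$ would approach $d$ and the Gaussian approximation fails; but then $\ln\ln(2n/d)\gtrsim \ln d$, and Proposition~\ref{pro:lnlnd} already gives $n\cdot\disp_{\cC}(n,d)\le d\ln d+O(d\ln\ln d)\lesssim d\ln\ln(2n/d)$. The only careful bookkeeping is ensuring the inequality $N(\varphi)\le n$ (and not merely $\lesssim n$) by a suitable rounding in the choice of $t$, together with verifying the stated Gaussian-tail regime for $V(\varphi)$ throughout the intermediate range $2d\le n\le e^{cd}$.
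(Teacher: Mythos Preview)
Your argument is correct and rests on the same core ingredient as the paper (the B\"or\"oczky--Wintsche covering estimate \eqref{eq:cor-1.2}), but the execution differs. The paper works with the \emph{density} part of \eqref{eq:cor-1.2}: it feeds $D(\varphi)=c\,d\ln(1+d\cos^2\varphi)$ into Lemma~\ref{newdens} to obtain $n\cdot\disp_{\cC}(n,d)\le D(\varphi(n))$, and then bounds $d\cos^2\varphi(n)$ directly by invoking Theorem~\ref{eq:tikhomirov} (the polytope--approximation bound $\cos\varphi(n)\lesssim\sqrt{\ln(n/d)/d}$). You instead use the \emph{cardinality} part of \eqref{eq:cor-1.2}: you pick $\varphi$ so that the promised number of caps matches $n$, then estimate $nV(\varphi)$ by hand via the Gaussian tail. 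In effect you are re-deriving the relevant instance of Theorem~\ref{eq:tikhomirov} from \eqref{eq:cor-1.2} itself, which makes your proof more self-contained but longer; the paper's route is shorter because Lemma~\ref{newdens} absorbs the ``choose $\varphi$ to match $n$'' step and Theorem~\ref{eq:tikhomirov} delivers $t=d\cos^2\varphi(n)\lesssim\ln(n/d)$ in one line. Your case split for $n\ge e^{cd}$ via Proposition~\ref{pro:lnlnd} is fine and mirrors the paper's fallback for small $n$; the paper does not need a large-$n$ fallback because Theorem~\ref{eq:tikhomirov} holds for all $n\ge d+2$.
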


In particular, Theorem~\ref{pro:lnln-upper} shows that $\disp_{\cC}(cd,d)$ can be made arbitrarily small by choosing $c$ large enough.
The same is true for the spherical cap discrepancy (see \cite[Theorem~3]{HNWW01} which may  be adapted to our situation using
Lemma~\ref{vcdimcap} below). This is in contrast to the related problem of approximation of the Euclidean ball $B^{d+1}$ by inscribed polytopes
with $n$ vertices, where $n$ needs to grow faster than $d^{d/2}$ for the error to converge to zero, which follows from bounds
provided in  \cite{BF88, CP88, Gluskin88} (which are essentially inequality (\ref{eq:tikhomirov}) below).

Theorem~\ref{pro:lnln-upper} and Lemma~\ref{newlemma} imply that $V(\varphi(n))\leq C\, (d/n) \ln \ln (2n/d)$.
This immediately provides a bound on covering numbers of the sphere.
Recall that  a set of points $\mathcal{N}\subset \Sp^d$ is called a (geodesic) $\varepsilon$-net if for every $x\in \Sp^d$ there is
$y\in \mathcal{N}$ such that $\varrho(x,y)\le \varepsilon$. Therefore, adjusting the constant $C$ we have the following bound.

\begin{cor}\label{cor:net}
For every $d\geq 2$ and $\varepsilon \in (0, \pi/2)$ there exists  a geodesic $\varepsilon$-net of $\Sp^d$ with cardinality bounded by
$$
d\, \max\Big\{2, \, \frac{C\,\ln \ln (2/V(\varepsilon))}{V(\varepsilon)}\Big\},
$$
where $C>0$ is an absolute constant.
\end{cor}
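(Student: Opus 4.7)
The plan is to convert the statement about $\varepsilon$-nets into one about the geodesic covering radius $\varphi(n)$: a geodesic $\varepsilon$-net of $\Sp^d$ of cardinality $n$ exists if and only if $\varphi(n)\le \varepsilon$, which by the monotonicity of $V$ is equivalent to $V(\varphi(n))\le V(\varepsilon)$. Combining Lemma~\ref{newlemma} with Theorem~\ref{pro:lnln-upper}, one has for every $n\ge 2d$,
$$
V(\varphi(n)) \le \frac{C_1\, d\ln\ln(2n/d)}{n},
$$
where $C_1$ denotes the absolute constant of Theorem~\ref{pro:lnln-upper}. It therefore suffices to exhibit an absolute constant $C$ such that the integer $N_0:=\lceil d\max\{2,\, C\ln\ln(2/V(\varepsilon))/V(\varepsilon)\}\rceil$ makes the right-hand side at most $V(\varepsilon)$.

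First I would record that the hypothesis $\varepsilon<\pi/2$ yields $V(\varepsilon)<1/2$, so $2/V(\varepsilon)>4$ and every iterated logarithm that appears is positive. Then the argument splits by which term realises the maximum defining $N_0$. If $N_0=2d$, the displayed bound gives $V(\varphi(2d))\le (C_1/2)\ln\ln 4$, while the case hypothesis $C\ln\ln(2/V(\varepsilon))/V(\varepsilon)\le 2$ combined with $V(\varepsilon)\le 1/2$ (so $\ln\ln(2/V(\varepsilon))\ge \ln\ln 4$) forces $V(\varepsilon)\ge (C/2)\ln\ln 4$; choosing $C\ge C_1$ closes this branch. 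If instead the second term realises the maximum, applying the displayed bound at $n=N_0$ reduces the required conclusion $V(\varphi(N_0))\le V(\varepsilon)$ to the inequality
$$
\ln\ln(2N_0/d)\le \frac{C}{C_1}\,\ln\ln(2/V(\varepsilon)),
$$
which must hold uniformly for $V(\varepsilon)\in (0,1/2)$.

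This uniform estimate is the only genuine obstacle, and it is a routine slow-growth check. Writing $a:=2/V(\varepsilon)\ge 4$, one has $2N_0/d\le 3aC\ln\ln a$ (the factor $3$ absorbs the rounding and the trivial branch $2d$), so
$$
\ln(2N_0/d)\le \ln a+\ln(3C)+\ln\ln\ln a.
$$
A second logarithm, together with the lower bound $\ln\ln a\ge \ln\ln 4>0$, yields $\ln\ln(2N_0/d)\le \ln\ln a + O(1)$ uniformly in $a\ge 4$. Since $\ln\ln a$ is bounded below by a positive constant, the additive $O(1)$ loss is absorbed by the multiplicative factor $C/C_1$ once $C$ is taken to be a sufficiently large absolute multiple of $C_1$. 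Renaming this final choice as the $C$ of the statement completes the proof.
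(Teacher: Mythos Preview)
Your proof is correct and follows essentially the same approach as the paper: both combine Lemma~\ref{newlemma} with Theorem~\ref{pro:lnln-upper} to obtain $V(\varphi(n))\le C_1 d\ln\ln(2n/d)/n$ and then resolve the resulting implicit inequality via a routine slow-growth argument. The only cosmetic difference is that the paper starts from the minimal cardinality $n$ of an $\varepsilon$-net, uses $V(\varphi(n-1))\ge V(\varepsilon)$ to get $(n-1)/d\le C\ln\ln(2(n-1)/d)/V(\varepsilon)$ directly, and then solves for $n$, whereas you guess the final bound $N_0$ upfront and verify it; the underlying computation is the same.
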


This estimate complements standard bounds (see for example Corollary~5.5 in \cite{AS17}) and should be compared with bounds
in Examples~6.1 and 6.2 in \cite{BW03}.

Moreover, the results of \cite{BW03} imply the following lower bound, which unfortunately holds only for $n$ superexponential  in $d$.
This restriction appears as the proof uses the lower bound in \eqref{eq:density-bounds} for $\IR^d$ and a
sufficiently good approximation of $B^{d+1}$ by a polytope with $n$ vertices. We would like to note that a similar
restriction on $n$ also appears (although for a different reason)
in the lower bound in \cite{BC22} for the minimal dispersion in the case of the cube.

\begin{thm}\label{pro:lower}
Let $d\geq 2$.
Then the minimal dispersion satisfies
\[
n\cdot \disp_{\cC}(n,d)
\ge c_0\,d \qquad  \text{for }\,\,\, n\ge C d^{(d+3)/2}\ln d,
\]
where $c_0, C>0$ are absolute constants.
\end{thm}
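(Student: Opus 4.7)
The plan is to combine Lemma~\ref{newlemma} with a density-transfer argument that imports the Coxeter--Few--Rogers lower bound $\vartheta_d \ge \tau_d \sim d/(e\sqrt{e})$ from \eqref{eq:density-bounds} into the spherical setting. Since $n\cdot\disp_{\cC}(n,d)=n V(\varphi(n))$, it suffices to show $n V(\varphi(n))\ge c_0 d$; the role of the hypothesis $n\ge C d^{(d+3)/2}\ln d$ is to force $\varphi(n)$ to be so small that, at the scale of the optimal covering caps, the sphere is indistinguishable from Euclidean space up to a factor $1+o(1)$.

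First I would establish an a~priori upper bound on $\varphi(n)$. Theorem~\ref{pro:lnln-upper} combined with Lemma~\ref{newlemma} yields $V(\varphi(n))\le C\,d\ln\ln(2n/d)/n$, and the standard cap-volume asymptotic $V(\varphi)\gtrsim \varphi^d/\sqrt{d}$, valid for small $\varphi$, then implies after substituting the hypothesis on $n$ a bound of the form $\varphi(n)\le \varphi_0/\sqrt{d}$, with $\varphi_0$ a small absolute constant to be fixed in the next step. Given optimizers $x_1,\dots,x_n\in\Sph$ with covering radius $\varphi(n)$, the inscribed polytope $P=\conv\{x_1,\dots,x_n\}\subset B^{d+1}$ then approximates the ball in Hausdorff distance by at most $2\sin(\varphi(n)/2)=O(1/\sqrt{d})$, since every $y\in\Sph$ lies within that Euclidean distance of some vertex~$x_i$.

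The last step, which is the main obstacle, is to push the spherical covering through $P$ to a Euclidean covering of comparable density. Radial projection onto the affine hyperplane of each facet $F$ of $P$ is a near-isometry of the spherical Voronoi cell of $F$: because $d\varphi(n)^2\le \varphi_0^2$, both the distortion of $d$-volumes and the distortion of the caps $B(x_i,\varphi(n))$ (with $x_i$ a vertex of $F$) into Euclidean balls of radius $\sin\varphi(n)$ are of order $1+O(\varphi_0^2)$. Reassembling these images over the facets of $P$, or passing to a periodic extension as in the proof of Theorem~\ref{thm:disp-covering}, produces a Euclidean covering of $\IR^d$ whose density is at most $(1+O(\varphi_0^2))\, n V(\varphi(n))$; invoking \eqref{eq:density-bounds} and choosing $\varphi_0$ so that $O(\varphi_0^2)\le \tfrac{1}{2}$ gives $n V(\varphi(n))\ge \tfrac{1}{2}\vartheta_d\ge c_0 d$. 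Making this density comparison uniform across the boundaries of the Voronoi cells of $P$ is the delicate point; the hypothesis $n\ge C d^{(d+3)/2}\ln d$ is precisely the threshold beyond which $P$ approximates $B^{d+1}$ well enough, in the sense of the inscribed-polytope bounds recalled as \eqref{eq:tikhomirov} below, for this comparison to lose only a constant factor.
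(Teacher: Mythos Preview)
Your overall strategy matches the paper's: first force $\varphi(n)\lesssim 1/\sqrt{d}$ using an existing upper bound on the dispersion, then invoke a Coxeter--Few--Rogers-type density lower bound at that scale. The divergence is in how the second step is executed.

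The paper does not attempt the sphere-to-Euclidean transfer you sketch. Instead it cites \cite[Example~6.3]{BW03}, where the Coxeter--Few--Rogers argument has already been adapted directly to the sphere: any covering of $\Sp^d$ by caps of geodesic radius $\varphi\le\arcsin(1/\sqrt{d+1})$ has density at least $c\,d$. With this in hand the proof is two lines: Proposition~\ref{pro:lnlnd} and Lemma~\ref{lem:volume-bounds}(i) give $\sin^d\varphi(n)\lesssim d^{3/2}(\ln d)/n$, so $n\ge Cd^{(d+3)/2}\ln d$ forces $\sin\varphi(n)\le 1/\sqrt{d+1}$, and then Lemma~\ref{newlemma} plus the BW03 bound yield $n\cdot\disp_{\cC}(n,d)=\dens(n,d)\ge c\,d$.

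Your third paragraph, by contrast, is not a proof but a sketch with an acknowledged gap. Projecting a spherical covering radially onto the facets of $\conv\{x_1,\dots,x_n\}$ and ``reassembling'' into a Euclidean covering of $\IR^d$ is not straightforward: the facets do not tile $\IR^d$, there is no natural periodic extension (and the paper's proof of Theorem~\ref{thm:disp-covering} does not supply one --- it simply quotes the asymptotic~\eqref{eq:best-app}), and the boundary effects you flag as ``the delicate point'' are precisely the content of the CFR-to-sphere adaptation you would be reproving. If you want a self-contained argument, the correct route is to redo the simplicial argument of \cite{CFR59} intrinsically on the sphere, which is what \cite{BW03} does; otherwise, just cite it.

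A minor point: Proposition~\ref{pro:lnlnd} already suffices for the first step and gives cleaner arithmetic than Theorem~\ref{pro:lnln-upper}.
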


Note that, asymptotically as $n\to\infty$, the constant $c_0$ in Theorem~\ref{pro:lower} can be chosen as $\frac{1}{e\sqrt{e}}$
 by Theorem~\ref{thm:disp-covering}.

Several upper bounds, including the ones in Propositions~\ref{pro:400}, \ref{pro:lnlnd} and Theorem~\ref{pro:lnln-upper}
(all derived from \cite{BW03}), rely on semi-random constructions of identically and independently distributed (i.i.d.)
random points (with respect to the probability measure $\sigma$) with well-separated points to fill the gaps left by the random points.
Such  a technique goes back at least  to Rogers  \cite{Rog57}, who used it for the upper bound of \eqref{eq:density-bounds}
and for the spherical covering density in \cite{Rog63}. The book \cite[Chapter~5]{AS17}  contains a  presentation
of this technique for the case of covering the sphere by spherical caps. It was also recently used in \cite{AL24} for the minimal dispersion
on the cube. Note that Nasz\'odi \cite{Nas16} used non-probabilistic tools to reprove the bound from \cite{BW03} mentioned before
Proposition~\ref{pro:lnlnd}.

If the number of points $n$ is large compared to the dimension $d$,
then covering of $\Sp^d$ relying only on i.i.d. random centers may introduce an additional logarithmic factor in $n$.
This is because the largest gap between i.i.d. random points is logarithmically larger than the average gap, see \cite{BCL10,Jan86,RS16} for details
(note that in the case $d=1$ it is an easy fact).
This effect is related to the coupon collector's problem (see \cite{HKK+20} for dispersion on the cube).

\begin{rem}
	In a recent work, Hoehner and Kur \cite{HK25} have shown that covering most of the sphere using i.i.d.~random points is
asymptotically optimal in high dimension. More precisely, the maximum proportion of the sphere $\Sp^d$ covered by $n$ spherical
caps of volume $\frac{1}{n}$ tends to $1-\frac{1}{e}$ as $n$ and $d$ tend to infinity.
\end{rem}

 A separate question is what bounds can be obtained by using a random choice of points (with respect to the normalized Lebesgue measure).
 Such a choice of points was used in recent works on dispersion of the cube \cite{Lit21,LL22,Rud18} and for spherical lens dispersion  \cite{PR24}.
 Moreover, the above mentioned bounds in \cite{Rud18} and  \cite{PR24}  are immediate consequences of a result in \cite{BEH+89},
 which essentially uses the so-called
 $\varepsilon$-net theorem in combinatorics due to \cite{HW87} (see Theorem~\ref{thm:disp-vc} below).
 We would like to note that this $\varepsilon$-net theorem has been used by Nasz\'odi \cite{Nas19} to construct a good approximation
 of a convex body by convex polytopes with small number of vertices.
 Below, we use the results from \cite{BEH+89,HW87} to derive the following bound, which is slightly weaker than the one in
 Theorem~\ref{pro:lnln-upper} but relies only on i.i.d.~points.

\begin{prop}\label{pro:upper-vc} Let $d\geq 2$. Then
the minimal dispersion satisfies
\[
n\cdot \disp_{\cC}(n,d)
\le \frac{3(d+2)}{\ln 2}\ln\Big(\frac{2e n}{d+2}\Big) \qquad  \text{for }\,\,\, n\ge d+2.
\]
Moreover, the bound holds for independent  uniformly distributed points on $\Sp^d$.
\end{prop}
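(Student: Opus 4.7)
The plan is to invoke the classical $\varepsilon$-net theorem of Haussler and Welzl~\cite{HW87}, in the quantitative form developed in Blumer--Ehrenfeucht--Haussler--Warmuth~\cite{BEH+89}, applied to the set system $\cC$ of spherical caps on $(\Sph,\sigma)$. The first ingredient is the VC-dimension bound $\vdim(\cC)\le d+2$ supplied by Lemma~\ref{vcdimcap}; this is standard because every spherical cap is the trace on $\Sph$ of a closed halfspace of $\IR^{d+1}$, and closed halfspaces of $\IR^{d+1}$ have VC-dimension $d+2$.

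The quantitative $\varepsilon$-net estimate obtained from the Sauer--Shelah lemma together with the double-sampling trick (essentially Theorem~\ref{thm:disp-vc}) asserts that, for $X_1,\dots,X_n$ i.i.d.~with law $\sigma$ and any $\varepsilon\in(0,1)$,
\[
\IP\Big(\exists\,C\in\cC\colon \sigma(C)\ge \varepsilon,\ C\cap\{X_1,\dots,X_n\}=\emptyset\Big)
\le 2\Big(\frac{2en}{d+2}\Big)^{d+2} 2^{-\varepsilon n/2}.
\]
I would then substitute the candidate value $\varepsilon=\frac{3(d+2)}{n\ln 2}\ln\!\bigl(\frac{2en}{d+2}\bigr)$ from the statement. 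With this choice $\varepsilon n/2 = \tfrac{3(d+2)}{2}\log_2\!\bigl(\tfrac{2en}{d+2}\bigr)$, so the right-hand side collapses to $2\bigl(\tfrac{d+2}{2en}\bigr)^{(d+2)/2}$, which for $n\ge d+2\ge 4$ is bounded by $2(2e)^{-2}=(2e^2)^{-1}<1$.

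Therefore a random sample of $n$ i.i.d.~uniform points on $\Sph$ achieves spherical cap dispersion at most $\varepsilon$ with positive probability, which proves both the existence statement and the ``moreover'' assertion. The only potential obstacle is matching the constant $3/\ln 2$ exactly: this is precisely the factor at which the shatter-function exponent $(d+2)\log_2(2en/(d+2))$ coming from Sauer--Shelah is cancelled by $\varepsilon n/2 = \tfrac{3(d+2)}{2}\log_2(2en/(d+2))$, leaving a residual $\tfrac{d+2}{2}\log_2(2en/(d+2))$ that makes the failure probability strictly less than $1$. Aside from this bookkeeping, the argument is a direct substitution once the VC-dimension bound and the random-sampling $\varepsilon$-net inequality are in hand.
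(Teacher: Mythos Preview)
Your proposal is correct and follows essentially the same route as the paper: invoke Lemma~\ref{vcdimcap} to get $\vdim(\cC)=d+2$, plug this into the probabilistic $\varepsilon$-net bound of Theorem~\ref{thm:disp-vc}, and substitute $\varepsilon=\frac{3(d+2)}{n\ln 2}\ln\bigl(\frac{2en}{d+2}\bigr)$ to make the failure probability strictly less than~$1$. Your explicit computation that the right-hand side collapses to $2\bigl(\tfrac{d+2}{2en}\bigr)^{(d+2)/2}\le (2e^2)^{-1}$ is exactly the verification the paper alludes to when it remarks (just after Theorem~\ref{thm:disp-vc}) that this choice of $\varepsilon_m$ drives the probability below~$1$.
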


In fact, it can be deduced from the proof that the bound holds with  probability $1-\alpha$,
$\alpha\in (0,1)$, for i.i.d. random points if the factor $3$ is replaced by a large enough constant
depending only on $\alpha$. Although the proof of the combinatorial $\varepsilon$-net theorem is not very long
(for the sake of completeness, we present it in Section~\ref{sec:vc} together with the proof of
Proposition~\ref{pro:upper-vc}), there is an even simpler approach using a standard (geometric)
$\varepsilon$-net argument, which however yields a slightly worse bound in the regime
$d+2\leq n \leq Cd^2$ than Proposition~\ref{pro:upper-vc}.

\begin{prop}\label{pro:upper-net}
Let $d, n\ge 2$. Then
the minimal dispersion satisfies
\[
n\cdot \disp_{\cC}(n,d)
\le 12d\ln n.
\]
Moreover, the bound holds for independent  uniformly distributed points on $\Sp^d$.
\end{prop}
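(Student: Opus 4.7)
The plan is a direct application of the probabilistic method combined with a geometric net over cap centers. Let $x_1,\ldots,x_n$ be i.i.d.\ uniform on $\Sp^d$ and set $\varepsilon := 12 d \ln n / n$; if $\varepsilon\ge 1$ the conclusion is trivial since $\disp_{\cC}\le 1$ always, so I assume $\varepsilon<1$. The goal is to show $\mathbb{P}\bigl[\disp_{\cC}(\{x_1, \ldots, x_n\}) > \varepsilon\bigr] < 1$, which yields both the existence assertion and the statement that typical i.i.d.\ samples meet the bound.

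First, discretize. Fix $\delta := \sqrt{\pi/(2d)}\,\varepsilon$ and let $\mathcal{N} \subset \Sp^d$ be a maximal geodesically $\delta$-separated set, hence also a geodesic $\delta$-net. The packing inequality $|\mathcal{N}|\cdot V(\delta/2) \le 1$ together with a standard lower bound of the form $V(\delta/2) \geq c_0 (\delta/c_1)^d$ (valid for $\delta \leq \pi/2$) yields $|\mathcal{N}| \le (C/\delta)^d = (C\sqrt{d}/\varepsilon)^d$. For any empty cap $B(x,\varphi)$ with $V(\varphi) > \varepsilon$, pick $y \in \mathcal{N}$ within geodesic distance $\delta$ of $x$; then $B(y, \varphi - \delta) \subseteq B(x, \varphi)$ and is also empty. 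The uniform bound
\[
V'(\psi) = \frac{\sin^{d-1}(\psi)}{\int_0^\pi \sin^{d-1}(t)\,dt} \le \sqrt{d/(2\pi)},
\]
coming from Gautschi's inequality applied to $\Gamma((d+1)/2)/\Gamma(d/2)$, gives $V(\varphi) - V(\varphi-\delta) \leq \delta \sqrt{d/(2\pi)} = \varepsilon/2$ by the choice of $\delta$, hence $V(\varphi - \delta) \geq \varepsilon/2$. Consequently the bad event is contained in $\bigcup_{y\in\mathcal{N}}\bigl\{B(y, V^{-1}(\varepsilon/2)) \cap \{x_1,\ldots,x_n\}=\emptyset\bigr\}$.

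The union bound together with independence then produces
\[
\mathbb{P}\bigl[\disp_{\cC}(\{x_1,\ldots,x_n\}) > \varepsilon\bigr] \le |\mathcal{N}|\,(1 - \varepsilon/2)^n \le \Bigl(\frac{C\sqrt{d}}{\varepsilon}\Bigr)^{\!d} e^{-n\varepsilon/2}.
\]
Substituting $\varepsilon = 12 d \ln n / n$ turns the exponent into $d\ln\bigl(Cn/(12\sqrt{d}\ln n)\bigr) - 6 d\ln n$, which is strictly negative for all $n, d \ge 2$ in the nontrivial regime $\varepsilon<1$; the constant $12$ is tuned precisely so that the exponential saving $6 d\ln n$ swallows the covering entropy $d\ln(C\sqrt{d}/\varepsilon)$. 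The main technical point — and really the only non-routine step — is securing the correct $\sqrt{d}$ dependence on $\sup V'$: with only $V' = O(d)$ the packing factor would become $(Cd/\varepsilon)^d$ and no finite leading constant would suffice. The remaining ingredients (net construction via volume packing, the containment $B(y,\varphi-\delta) \subseteq B(x,\varphi)$, and the union bound) are entirely standard.
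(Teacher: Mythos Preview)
Your proof is correct and follows the same overall scheme as the paper's: take i.i.d.\ uniform points, build a geodesic $\delta$-net of centers, show that every cap of volume $>\varepsilon$ contains a ``test cap'' of volume $\ge \varepsilon/2$ centered at a net point, and finish with a union bound. The one genuine difference is the volume-comparison step. The paper controls the loss multiplicatively via the Bishop--Gromov inequality $V(\delta)/V(\delta-\varepsilon)\le (\delta/(\delta-\varepsilon))^d$ (Lemma~\ref{lem:volume-bounds}\,(v)), which forces net spacing $\varepsilon\sim \gamma\delta/d \ge \gamma^2/d$ and hence a net of size $\le 2(3\pi d/\gamma^2)^d$. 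You instead control the loss additively via the derivative bound $V'\le \sqrt{d/(2\pi)}$ coming from the Wallis integral~\eqref{eq:wallis}, which allows the coarser spacing $\delta\sim \varepsilon/\sqrt{d}$ and a smaller net of size $\lesssim(\sqrt{2\pi d}/\varepsilon)^d$. Both routes feed into the same union-bound calculation and yield the constant $12$; your version is a bit more direct and avoids invoking Lemma~\ref{lem:volume-bounds}\,(v) and Lemma~\ref{lem:volume-radius}.
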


The proof of Proposition~\ref{pro:upper-net} will be given in Section~\ref{sec:test-sets} and it is based on the idea of
$\delta$-approximations which were discussed in \cite{LL22} (see also \cite{AL24,Lit21}) for the dispersion on the
cube as a simplification of the notion of $\delta$-covers used in \cite{DRZ19} on the sphere. In the preprint \cite{BPR21},
a predecessor of \cite{PR24}, an approach using $\delta$-covers was carried out, giving bounds of the same order as in Proposition~\ref{pro:upper-net}.
We refer to \cite{Gne08,Gne24,Rud18} for $\delta$-covers and related notions on the cube. Let us note that $\delta$-covers were originally used in discrepancy theory from which the concept of dispersion emerged.

The minimal lens dispersion, i.e. $\disp_{\cL}(n,d)$ defined in \eqref{eq:dispersion-intersect}, is more complicated to analyze since there is no direct connection to covering. Theorem~B in \cite{PR24} asserts that
\begin{equation} \label{eq:prochno-rudolf}
n\cdot \disp_{\cL}(n,d)
\le C d\ln\Big(\frac{en}{32d}\Big)\qquad \emph{for} \,\,\, n\ge 32d,
\end{equation}
where $C>0$ is an absolute constant. Moreover, this holds
 for the expected dispersion of $n$ i.i.d.~random points and the proof is based on bounds  
 derived from \cite{BEH+89} (analogous to our Theorem~\ref{thm:disp-vc}).
 For completeness, we provide a proof of Theorem~\ref{thm:disp-vc} and inequality~(\ref{eq:prochno-rudolf})
 in Section~\ref{sec:vc}. In \cite{BPR21} a suboptimal bound using
 $\delta$-covers was given. We recover this bound with a simpler proof building on $\delta$-approximations
 as used for Proposition~\ref{pro:upper-net}.

\begin{prop}\label{thm:upper-net-intersect}
Let $d, n\ge 2$. Then the minimal lens dispersion satisfies
\[
n\cdot \disp_{\cL}(n,d)
\le 24(d+1)\ln n .
\]
Moreover, the bound holds for independent uniformly distributed points on $\Sp^d$.
\end{prop}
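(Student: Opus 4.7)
The plan is to mimic the $\delta$-approximation strategy used for Proposition~\ref{pro:upper-net}, now applied to the family $\cL$ of lenses. Since every $A\in \cL$ is an intersection of two caps, it is parametrized by twice as many coordinates as a single cap, which accounts for squaring the cardinality of the approximating family and, in the final bound, for the doubling of the multiplicative constant.

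First I would construct a finite ``approximating'' family $\widetilde{\cC}$ of caps with the property that every $C \in \cC$ admits a $\widetilde{C} \in \widetilde{\cC}$ satisfying $\widetilde{C} \subset C$ and $\sigma(C) - \sigma(\widetilde{C}) \le \delta$, where $\delta$ is of order $1/n$. Combining an $\eta$-net on $\Sp^d$ of size at most $(c/\eta)^d$ for centres with a discretization of the cap-volume function $V(\varphi)$ into $\lceil 1/\delta\rceil$ equally spaced levels, and using the uniform derivative bound $V'(\varphi)\le \sqrt{d/(2\pi)}$ (available from Section~\ref{subsec:volume}) to balance the two scales by taking $\eta$ of order $\delta/\sqrt{d}$, this yields $|\widetilde{\cC}| \le (c' n)^{d+1}$ (up to absorbing lower-order factors into the constant using $d,n\ge 2$). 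Setting
\[
\widetilde{\cL}:=\{\widetilde{C}_1\cap \widetilde{C}_2\colon \widetilde{C}_1,\widetilde{C}_2\in \widetilde{\cC}\}
\]
gives $|\widetilde{\cL}|\le (c'n)^{2(d+1)}$, and for every $A=C_1\cap C_2\in \cL$ the set $\widetilde{A}:=\widetilde{C}_1\cap \widetilde{C}_2$ is contained in $A$ with
\[
\sigma(A)-\sigma(\widetilde{A}) \le \sigma(C_1\setminus \widetilde{C}_1)+\sigma(C_2\setminus \widetilde{C}_2) \le 2\delta.
\]

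Next I would apply a probabilistic union bound. For $n$ i.i.d.~$\sigma$-distributed points $x_1,\dots,x_n$ on $\Sp^d$ and a fixed measurable $\widetilde{A}$ with $\sigma(\widetilde{A})\ge \varepsilon$, the probability that $\widetilde{A}$ contains none of the $x_i$ is at most $e^{-n\varepsilon}$. Summing over $\widetilde{\cL}$ yields
\[
\IP\bigl(\exists\, \widetilde{A}\in \widetilde{\cL}\ \text{with}\ \sigma(\widetilde{A})\ge \varepsilon\ \text{and}\ \widetilde{A}\cap \{x_i\}_i=\emptyset\bigr) \le (c'n)^{2(d+1)}\,e^{-n\varepsilon},
\]
which is strictly less than one once $n\varepsilon > 2(d+1)\ln(c' n)$. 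On the complementary event, every lens $A\in \cL$ of measure at least $\varepsilon+2\delta$ meets $\{x_1,\dots,x_n\}$, and therefore $n\cdot \disp_{\cL}(\{x_1,\dots,x_n\}) \le n\varepsilon+2$.

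The main obstacle is the clean calibration of constants in Step~1 so as to match the explicit factor $24$ in the stated bound; this is essentially an arithmetic task: after selecting $\delta=1/n$ and $\varepsilon = 2(d+1)\ln(c'n)/n$, one absorbs the additive $+2$ and the constant $\ln c'$ into the logarithm using the hypotheses $d,n\ge 2$. The probabilistic part of the argument simultaneously yields the ``moreover'' assertion that the bound already holds in expectation for i.i.d.~uniformly distributed points on $\Sp^d$.
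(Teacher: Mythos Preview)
Your approach is correct and yields the stated bound, but it differs from the paper's proof in a meaningful way. The paper builds a \emph{multiplicative} $\tfrac{\gamma}{2}$-approximation of $\cL_\gamma$ (Lemma~\ref{gamma}): it fixes the target volume $\gamma$, discretizes \emph{radii} with step $\varepsilon=\gamma^2/(12d)$, and controls the volume loss via the Bishop--Gromov ratio bound of Lemma~\ref{lem:volume-bounds}\,(v) together with the observation $\delta_i\ge\gamma$ from Lemma~\ref{lem:volume-radius}; the result is then fed into Lemma~\ref{lemma}. You instead build an \emph{additive} approximation of all caps at once by discretizing \emph{volumes} and bounding the center-shift loss via the derivative estimate $V'(\varphi)\le\sqrt{d/(2\pi)}$ (which indeed follows from \eqref{capformula} and \eqref{eq:wallis}), then apply a bare union bound. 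Your route is essentially the $\delta$-cover method the paper attributes to \cite{BPR21} and explicitly sets aside in favour of the multiplicative approximation; it is arguably more direct and avoids both Lemma~\ref{lem:volume-bounds}\,(v) and Lemma~\ref{lem:volume-radius}. One imprecision worth flagging: your claim $|\widetilde{\cC}|\le(c'n)^{d+1}$ with an absolute $c'$ is not quite right, since $\eta$ of order $\delta/\sqrt{d}$ forces an extra factor $d^{d/2}$ in the net size. After taking logarithms this contributes only $\tfrac{d}{2}\ln d$, and in the nontrivial regime $24(d+1)\ln n<n$ one has $n\gtrsim d$ and hence $\ln d\le\ln n$, so it is indeed absorbed into the $24(d+1)\ln n$ budget; but this step should be made explicit rather than hidden in ``absorbing lower-order factors into the constant''.
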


We finally would like to note that the bound  \eqref{eq:prochno-rudolf} and Proposition~\ref{thm:upper-net-intersect} extend to families generated by intersections of an arbitrary  number of caps with essentially the same proof. For future reference we formulate it below and prove it in the last section.

\begin{theorem}\label{newthm}
Let $d, k\geq 2$ and
\[
\cL_k
=\{C_1\cap C_2\cap ...\cap C_k \, \colon\,  C_1, C_2, ..., C_k \in \cC\}.
\]
Let $\disp_{\cL_k}(n,d)$ be defined similarly to $\disp_{\cC}(n,d)$ and $\disp_{\cL}(n,d)$.
Then
\[
n\cdot \disp_{\cL_k}(n,d)
\le C d\, k \ln k  \, \ln\Big(\frac{n}{c d\, k \ln k}\Big)\qquad \emph{for} \,\,\, n\ge C \,d\, k \ln k,
\]
where  $c,C>0$ are absolute constants.
\end{theorem}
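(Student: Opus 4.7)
The plan is to follow the same VC-dimension strategy used for Proposition~\ref{pro:upper-vc} and for inequality~\eqref{eq:prochno-rudolf}, substituting the role of caps (respectively, lenses) by $k$-fold intersections of caps. Concretely, I would invoke the combinatorial $\varepsilon$-net theorem (Theorem~\ref{thm:disp-vc}) on the range space $(\Sp^d,\cL_k)$ equipped with the normalized spherical measure~$\sigma$; once the VC dimension of $\cL_k$ is controlled, the quantitative statement of that theorem produces the required dispersion bound, and in fact the bound is already attained by i.i.d.\ uniform random points.

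The first and key step is to establish
\[
\vdim(\cL_k)\le C_1\, d\, k\, \ln k
\]
for an absolute constant $C_1$. I would combine two standard facts. First, since spherical caps are traces on $\Sp^d$ of affine half-spaces of $\IR^{d+1}$, the family $\cC$ has VC dimension at most $d+2$. Second, a classical result on intersections of set systems (Blumer--Ehrenfeucht--Haussler--Warmuth; see also van der Vaart--Wellner, Thm.~2.6.7) asserts that if $\cF$ has VC dimension~$v$, then the $k$-fold intersection family $\{F_1\cap\dots\cap F_k\colon F_i\in\cF\}$ has VC dimension at most $2vk\ln(3k)$. Applied with $\cF=\cC$ and $v=d+2$, this gives the displayed bound with an explicit constant.

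Having the VC bound at hand, setting $v_k:=C_1 dk\ln k$ and applying Theorem~\ref{thm:disp-vc} guarantees that a sample of $n$ i.i.d.\ uniform points on $\Sp^d$ hits every $A\in\cL_k$ with $\sigma(A)\ge \varepsilon$ with positive probability, provided $n$ is at least a constant multiple of $\frac{v_k}{\varepsilon}\ln\bigl(\frac{v_k}{\varepsilon}\bigr)$. The final step is purely algebraic: one checks that the choice
\[
\varepsilon:=\frac{C\, d\,k\,\ln k}{n}\,\ln\!\Big(\frac{n}{c\, d\,k\,\ln k}\Big),
\]
with $C$ sufficiently large and $c$ suitably chosen, satisfies this inequality throughout the regime $n\ge C d k\ln k$. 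Since a positive-probability event produces a point configuration with dispersion at most $\varepsilon$, the theorem follows.

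The step I expect to be the main obstacle is the first one, namely pinning down the $k\ln k$ dependence in the VC-dimension bound for $k$-fold intersections. A naive argument that bounds the number of labelings of a shattered set by the $k$-th power of the shatter function of $\cC$ costs an additional factor inside the logarithm and would produce a weaker bound of the form $dk\ln(dk)$; obtaining the sharp rate requires invoking the Sauer--Shelah inequality in the optimal form, as in the references cited above. Once this bound is in place, the remaining manipulations are straightforward adaptations of the $k=1$ and $k=2$ arguments already presented in the paper.
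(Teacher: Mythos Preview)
Your proposal is correct and matches the paper's own argument essentially line for line: the paper also bounds $\vdim(\cL_k)$ by $2(d+2)k\log_2(3k)$ via Lemma~\ref{vcdimcap} together with \cite[Lemma~3.2.3]{BEH+89} (the Blumer--Ehrenfeucht--Haussler--Warmuth intersection bound you cite), and then plugs this into Theorem~\ref{thm:disp-vc}. Your anticipated obstacle---getting $k\ln k$ rather than $k\ln(dk)$---is exactly the point handled by that lemma, so there is nothing further to worry about.
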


The structure of this note is as follows.
In Section~\ref{subsec:volume} we review volume bounds for spherical caps and in Lemma~\ref{lem:e-net}
we provide a bound for the cardinality of a geodesic $\varepsilon$-net on $\Sp^d$.
 Section~\ref{subsec:disp-covering} is devoted to the proof of Lemma~\ref{newlemma}.
  In Section~\ref{subsec:approximation} we present the relation between dispersion and approximation of the ball by polytopes. The subsequent sections contain proofs of theorems and propositions.
Section~\ref{sec:test-sets} contains the proofs of Proposition~\ref{pro:upper-net} and Proposition~\ref{thm:upper-net-intersect}
using standard (geodesic) $\varepsilon$-nets and in Section~\ref{sec:vc} we present proofs of dispersion bounds
(Proposition~\ref{pro:upper-vc}, inequality (\ref{eq:prochno-rudolf}), and Theorem~\ref{newthm}) using VC-dimension.

\section{Preliminaries}\label{sec:proofs}

\subsection{Volume bounds}\label{subsec:volume}

The normalized volume of a spherical cap of geodesic radius $\varphi \in [0,\pi]$ can be computed by
\begin{equation} \label{capformula}
V(\varphi)
=\frac{\vol_{d-1}(\Sp^{d-1})}{\vol_d(\Sp^d)}\int_0^{\varphi}\sin^{d-1}t\dd t
= \frac{\int_0^\varphi \sin^{d-1}t\dd t}{2\int_0^\frac{\pi}{2} \sin^{d-1}t\dd t},
\end{equation}
where the $d$-dimensional volume of $\Sp^d$ is given by
$$\vol_d(\Sp^d) =\frac{2\pi^{(d+1)/2}}{\Gamma(\frac{d+1}{2})}.$$
It is known that the Wallis integral $\int_0^\frac{\pi}{2} \sin^{d-1}t\dd t$ satisfies
\begin{equation} \label{eq:wallis}
\sqrt{\frac{\pi}{2d}}
\le \int_0^{\frac{\pi}{2}}\sin^{d-1}t\,\dd t
\le \sqrt{\frac{\pi}{2(d-1)}}\qquad\text{for}\,\,\, d\ge 2,
\end{equation}
(which follows from \cite[Lemma~1]{BGW82}). We need the following volume bounds.

\begin{lem}\label{lem:volume-bounds}
Let $d\geq 2$.
\begin{enumerate}[label=(\roman*)]
	\item If $\varphi\in (0,\frac{\pi}{2})$, then
		\[
		\frac{1}{\sqrt{2\pi(d+1)}}\sin^d\varphi
		\le V(\varphi)
		\le \frac{1}{2}\sin^d\varphi.
		\]
	\item If $\varphi\le \arccos \frac{1}{\sqrt{d+1}}$, then
		\[
		\frac{1}{3\sqrt{2\pi(d+1)}}\cdot \frac{1}{\cos \varphi} \sin^d\varphi
		\le V(\varphi)
		\le \frac{1}{\sqrt{2\pi d}}\cdot \frac{1}{\cos \varphi} \sin^d\varphi.
		\]
	\item  If $\varphi\ge \arccos \frac{1}{\sqrt{d+1}}$, then
		\[
		\frac{1}{3e\sqrt{2\pi}}
		\le V(\varphi)
		\le \frac{1}{2}.
		\]
	\item If $\alpha\in (0,\frac{\pi}{2})$, then
\[
e^{-\frac{(d-1) \alpha^2}{2\cos^2 \alpha}}\sqrt{\frac{d-1}{2\pi}}\,\alpha
\le \frac{1}{2}-V\Big(\frac{\pi}{2}-\alpha\Big)
\le \sqrt{\frac{d}{2\pi}}\,\alpha.
\]
	\item If $0<\delta \le w \le \pi$, then
		\[
		\frac{V(w)}{V(\delta)}\le \Big(\frac{w}{\delta}\Big)^d.
		\]
\end{enumerate}
\end{lem}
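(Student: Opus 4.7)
The plan is to reduce all five bounds to estimates of $I(\varphi):=\int_0^\varphi\sin^{d-1}t\,dt$ and $J:=\int_0^{\pi/2}\sin^{d-1}t\,dt$, since $V(\varphi)=I(\varphi)/(2J)$ by (\ref{capformula}), combined with the Wallis bounds (\ref{eq:wallis}). The central identity
\[
\sin^{d-1}t=\frac{d}{dt}\!\left(\frac{\sin^d t}{d\cos t}\right)-\frac{\sin^{d+1}t}{d\cos^{2}t}
\]
integrated over $[0,\varphi]$ yields $I(\varphi)\le \sin^d\varphi/(d\cos\varphi)$ after dropping the nonnegative correction term. Conversely, from $\cos t\le 1$ we get $I(\varphi)\ge \int_0^\varphi\sin^{d-1}t\cos t\,dt=\sin^d\varphi/d$. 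The upper bound in (ii) is then immediate by dividing by $2J$ and using $J\ge \sqrt{\pi/(2d)}$. The upper bound in (i) follows by showing that $\varphi\mapsto V(\varphi)/\sin^d\varphi$ is nondecreasing on $(0,\pi/2]$ with value $1/2$ at $\pi/2$; the monotonicity is equivalent to $\sin^d\varphi\ge d\cos\varphi\, I(\varphi)$, which is exactly the upper bound on $I$ we just established.

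For the lower bound in (ii), estimating the correction via $\sin^{d+1}t/\cos^{2}t=\tan^{2}t\cdot\sin^{d-1}t\le \tan^{2}\varphi\cdot\sin^{d-1}t$ on $[0,\varphi]$, the identity yields
\[
\frac{\sin^d\varphi}{d\cos\varphi}\le I(\varphi)\left(1+\frac{\tan^{2}\varphi}{d}\right).
\]
When $\cos^2\varphi\ge 1/(d+1)$, we have $\tan^2\varphi\le d$, hence $I(\varphi)\ge \sin^d\varphi/(2d\cos\varphi)$; combining with $J\le \sqrt{\pi/(2(d-1))}$ and the elementary inequality $3\sqrt{(d-1)(d+1)}\ge 2d$ (valid for $d\ge 2$) produces the claimed constant. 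For (iii), the upper bound is trivial, and the lower bound follows from (ii) at $\varphi_0=\arccos(1/\sqrt{d+1})$ (using monotonicity of $V$), together with $\sin^d\varphi_0=(d/(d+1))^{d/2}\ge e^{-1/2}$, a consequence of $\ln(1-x)\ge -x/(1-x)$.

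The lower bound in (i) is the most delicate: I would combine $I(\varphi)\ge \sin^d\varphi/d$ with the sharper Wallis estimate $J\le \sqrt{\pi(d+1)/(2d^2)}$, which I would derive from the classical identity $J_{d-1}J_d=\pi/(2d)$ together with the lower part of (\ref{eq:wallis}) at index $d$ (giving $J_d\ge \sqrt{\pi/(2(d+1))}$). For (iv), the substitution $s=\pi/2-t$ converts $\tfrac{1}{2}-V(\pi/2-\alpha)$ into $\int_0^\alpha\cos^{d-1}s\,ds/(2J)$: the upper bound is immediate from $\cos^{d-1}s\le 1$ and $J\ge \sqrt{\pi/(2d)}$, while the lower bound uses $\cos^{d-1}s\ge \cos^{d-1}\alpha\ge e^{-(d-1)\alpha^2/(2\cos^2\alpha)}$, where the exponential estimate follows from $-\ln\cos\alpha=\int_0^\alpha\tan u\,du\le \alpha^2/(2\cos^2\alpha)$ via the bound $\tan u\le u/\cos^2 u\le u/\cos^2\alpha$. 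Finally, (v) follows by substituting $t=wu$ and $t=\delta u$ in the numerator and denominator of $V(w)/V(\delta)$; the claim reduces to the pointwise inequality $\sin(wu)/w\le \sin(\delta u)/\delta$ for $u\in[0,1]$, which is the monotonicity of $x\mapsto \sin x/x$ on $(0,\pi]$ applied to $wu\ge \delta u$ (both lying in $[0,\pi]$ since $w,\delta\le \pi$). The main technical obstacle is the lower bound in (i), where the standard Wallis upper bound is just slightly too weak and must be sharpened through the product identity $J_{d-1}J_d=\pi/(2d)$.
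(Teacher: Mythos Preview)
Your argument is correct in all five parts. For (iv) and (v) it is essentially the paper's own proof: the paper also rewrites $\tfrac12-V(\tfrac\pi2-\alpha)$ as a cosine integral and bounds $\ln\cos t$ via its second derivative (your $\tan u\le u/\cos^2\alpha$ bound is the integrated form of the same estimate), and for (v) the paper likewise reduces to the monotonicity of $x\mapsto \sin x/x$ on $(0,\pi]$, though phrased through an auxiliary function $F(t)$ rather than your direct substitution $t=wu$, $t=\delta u$.

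The genuine difference lies in (i)--(iii): the paper does not prove these at all but simply cites \cite[Proposition~5.1]{AS17} and \cite[Corollary~3.2]{BW03}. Your treatment is therefore more self-contained. The integration-by-parts identity $\sin^{d-1}t=\bigl(\sin^d t/(d\cos t)\bigr)'-\sin^{d+1}t/(d\cos^2 t)$ is the natural engine for both directions of (ii), and deducing (iii) by evaluating (ii) at $\varphi_0=\arccos(1/\sqrt{d+1})$ together with $(d/(d+1))^{d/2}\ge e^{-1/2}$ is clean. The one place where your argument goes beyond the obvious is the lower bound in (i): the standard Wallis upper bound $J\le\sqrt{\pi/(2(d-1))}$ combined with $I(\varphi)\ge\sin^d\varphi/d$ gives only $V(\varphi)\ge\sqrt{(d-1)/(2\pi)}\,\sin^d\varphi/d$, which falls just short of $1/\sqrt{2\pi(d+1)}$. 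Your sharpening $J_{d-1}\le\sqrt{\pi(d+1)/(2d^2)}$ via the product identity $J_{d-1}J_d=\pi/(2d)$ (together with the lower Wallis bound on $J_d$) closes this gap exactly, and is a nice observation not made explicit in either cited reference.
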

\begin{proof}
The upper bound in (i) is taken from \cite[Proposition~5.1]{AS17}. The lower bound in (i) and
 bounds in (ii) and (iii) are  taken from  \cite[Corollary~3.2]{BW03}.

To prove (iv) write
\begin{equation} \label{eq:iv}
\frac{1}{2}-V\Big(\frac{\pi}{2}-\alpha\Big)
=\frac{1}{2}-\frac{\int_0^{\frac{\pi}{2}-\alpha}\sin^{d-1}t\,\dd t}{2\int_0^{\frac{\pi}{2}}\sin^{d-1}t\,\dd t}
=\frac{\int_{\frac{\pi}{2}-\alpha}^{\frac{\pi}{2}+\alpha}\sin^{d-1}t\,\dd t}{4\int_0^{\frac{\pi}{2}}\sin^{d-1}t\,\dd t}.
\end{equation}
 The numerator in \eqref{eq:iv} can be written as
\[
\int_{\frac{\pi}{2}-\alpha}^{\frac{\pi}{2}+\alpha}\sin^{d-1}t\,\dd t
= \int_{-\alpha}^{\alpha}\cos^{d-1}t\,\dd t
= \int_{-\alpha}^{\alpha}e^{(d-1)\, f(t)}\dd t,
\]
where $f(t)=\ln \cos t$ on $(-\frac{\pi}{2},\frac{\pi}{2})$
satisfies $f(0)=0$, $f'(0)=0$ and $f''(t)=-\frac{1}{\cos^2 t}$. Consequently,
\[
-\frac{1}{\cos^2 \alpha}\frac{t^2}{2} \le f(t) \le -\frac{t^2}{2}\qquad \text{for }t\in (-\alpha,\alpha).
\]
We bound the numerator in \eqref{eq:iv} from above by
\[
\int_{-\alpha}^{\alpha}e^{(d-1)\,f(t)}\dd t
\le \int_{-\alpha}^{\alpha}e^{-(d-1)\frac{t^2}{2}}\dd t
\le 2\alpha,
\]
and from below by
\[
\int_{-\alpha}^{\alpha}e^{(d-1)\,f(t)}\dd t
\ge \int_{-\alpha}^{\alpha}e^{-\frac{d-1}{\cos^2 \alpha}\frac{t^2}{2}}\dd t
\ge 2\alpha e^{-\frac{(d-1)\alpha^2}{2\cos^2 \alpha}}.
\]
Combined with \eqref{eq:wallis} this completes the proof of (iv).

The bound in (v) follows from the Bishop-Gromov volume
comparison theorem (see e.g.~\cite[Theorem~III.4.5]{Cha93}). Note that
it is essentially proven using elementary tools in \cite[Lemma~3.1 (ii)]{BW03} for $0<\delta\le w\le \pi/2$.
For convenience of the reader we extend the proof to $0<\delta\le w\le \pi$.
We will use the following fact.

\begin{claim}
\label{FactSineFunction}
Let $d\geq 2$, $0< \delta \leq \pi$. Then for every $1\leq t\leq \frac{\pi}{\delta}$,
\begin{enumerate}
\item[(i)]
$\sin(t\delta) \leq t\sin(\delta),$
\item[(ii)]
$\delta \sin^{d-1}\delta \leq d\int_0^\delta \sin^{d-1} s\dd s.$
\end{enumerate}
\end{claim}

\begin{proof}
Note that   the function $\frac{\sin x}{x}$ is decreasing on $(0, \pi]$. Since $1\leq t\leq \frac{\pi}{\delta}$,
this implies that
\begin{align*}
\frac{\sin(t\delta)}{t\sin\delta} = \frac{\sin(t\delta)}{t\delta}\cdot\frac{\delta}{\sin\delta} \leq 1,
\end{align*}
which proves (i).

 To prove (ii) consider the function $f(\delta) = d\int_0^\delta \sin^{d-1} s\dd s - \delta\sin^{d-1}\delta$ on $(0,\pi]$.
 Then $f(0)=0$ and
\begin{align*}
f'(\delta) = d\sin^{d-1}\delta - \sin^{d-1}\delta - (d-1)\delta\sin^{d-2}\delta\cos\delta = (d-1)\sin^{d-2}\delta(\sin\delta - \delta\cos\delta).
\end{align*}
Since $\sin\delta - \delta\cos\delta >0$ on $(0,\pi]$, we observe that $f'(\delta)>0$. This proves that $f$ is non-negative
on $(0,\pi]$ and thus proves (ii).
\end{proof}

We continue to prove (v). Setting $t=w/\delta$ we have to show that $V(t\delta) \leq t^{d} V(\delta).$
By (\ref{capformula}) it suffices to show that for every $1\leq t\leq \frac{\pi}{\delta}$,
\begin{align*}
F(t):=  t^{d} \int_0^\delta \sin^{d-1}s\dd s  - \int_0^{t\delta} \sin^{d-1}s\dd s \geq 0.
\end{align*}
Using (i) and (ii) of Claim~\ref{FactSineFunction} we observe that for every $1\leq t\leq \frac{\pi}{\delta}$,
\begin{align*}
\frac{\partial}{\partial t} \left( \int_0^{t\delta} \sin^{d-1}s \dd s\right)
&= \delta\sin^{d-1} (t\delta) \leq t^{d-1}\delta\sin^{d-1}\delta \\
&\leq t^{d-1} d\int_0^\delta \sin^{d-1}s \dd s
= \frac{\partial}{\partial t}\left(t^{d} \int_0^\delta \sin^{d-1}s \dd s \right).
\end{align*}
This shows that $F'\geq 0$ and implies the desired result as $F(1)=0$.
\end{proof}

\begin{remark}\label{remone}
The asymptotic relation \eqref{eq:simplex-disp} for $\disp_{\cC}(d+2,d)$ is an immediate consequence of Lemma~\ref{lem:volume-bounds} (iv) and $\arccos x=\frac{\pi}{2}-x-O(x^3)$ as $x\to 0$.
\end{remark}

If $\frac{\pi}{2}-\varphi$ is of order $\frac{1}{\sqrt{d}}$, then the following lemma gives the asymptotic constant for $V(\varphi)$ as $d\to\infty$.

\begin{lemma} \label{lem:vol-gauss}
For every $\alpha\ge 0$,
\[
V\Big(\frac{\pi}{2}-\frac{\alpha}{\sqrt{d}}\Big)
\sim V\Big(\arccos\frac{\alpha}{\sqrt{d+1}}\Big)
\sim \frac{1}{\sqrt{2\pi}}\int_{\alpha}^{\infty}e^{-x^2/2}\dd x\qquad\text{as } \,\, d\to\infty.
\]
\end{lemma}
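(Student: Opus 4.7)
The plan is to evaluate both limits directly from the integral representation \eqref{capformula}; the case $\alpha=0$ is immediate since $V(\pi/2)=\tfrac12=(2\pi)^{-1/2}\int_0^\infty e^{-x^2/2}\dd x$, so assume $\alpha>0$. It suffices to prove the single convergence
\[
V\Big(\frac{\pi}{2}-\frac{\alpha_d}{\sqrt d}\Big)\longrightarrow \frac{1}{\sqrt{2\pi}}\int_\alpha^\infty e^{-x^2/2}\,\dd x
\]
for every bounded sequence $\alpha_d\to\alpha$, since the choice $\alpha_d=\alpha$ handles the first expression in the statement, while $\alpha_d=\sqrt d\,\arcsin(\alpha/\sqrt{d+1})=\alpha\sqrt{d/(d+1)}+O(d^{-1})\to\alpha$, combined with $\arccos y=\pi/2-\arcsin y$, handles the second.

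Using the symmetry $\sin(\pi/2-s)=\cos s$ one obtains
\[
\frac{1}{2} - V\Big(\frac{\pi}{2}-\frac{\alpha_d}{\sqrt d}\Big)
=\frac{\int_0^{\alpha_d/\sqrt d}\cos^{d-1}s\,\dd s}{2\int_0^{\pi/2}\sin^{d-1}t\,\dd t}
=\frac{1}{\sqrt d}\cdot\frac{\int_0^{\alpha_d}\cos^{d-1}(x/\sqrt d)\,\dd x}{2\int_0^{\pi/2}\sin^{d-1}t\,\dd t},
\]
where the second equality uses the substitution $s=x/\sqrt d$. The Taylor expansion $\ln\cos y=-y^2/2+O(y^4)$ shows that $\cos^{d-1}(x/\sqrt d)\to e^{-x^2/2}$ pointwise, and since these integrands are dominated by $1$ on any fixed compact interval, splitting $\int_0^{\alpha_d}=\int_0^\alpha+\int_\alpha^{\alpha_d}$, applying dominated convergence to the first piece, and using $|\alpha_d-\alpha|\to 0$ for the second, one obtains
\[
\int_0^{\alpha_d}\cos^{d-1}\!\Big(\frac{x}{\sqrt d}\Big)\,\dd x\longrightarrow\int_0^\alpha e^{-x^2/2}\,\dd x.
\]

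Combining this with the Wallis asymptotic $\int_0^{\pi/2}\sin^{d-1}t\,\dd t\sim\sqrt{\pi/(2d)}$, which follows from \eqref{eq:wallis}, yields $\tfrac12-V(\pi/2-\alpha_d/\sqrt d)\sim(2\pi)^{-1/2}\int_0^\alpha e^{-x^2/2}\dd x$, and rearranging gives the required limit. I expect no serious obstacle: this is a routine Laplace/DCT computation on a compact interval, with the only subtlety being a short check that the $O(1/d)$ perturbation in $\alpha_d$ is absorbed by the $L^\infty$-bound $|\cos^{d-1}|\le 1$ on a fixed bounded window.
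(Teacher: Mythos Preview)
Your proof is correct. The reduction to a bounded sequence $\alpha_d\to\alpha$ cleanly unifies the two expressions, and the DCT step on the fixed window $[0,\alpha]$ together with the trivial $L^\infty$ bound on $[\alpha,\alpha_d]$ is watertight.

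The paper, however, takes a different and rather more probabilistic route. Instead of working with the integral formula~\eqref{capformula} directly, it interprets $V(\arccos\frac{\alpha}{\sqrt{d+1}})$ as the probability that a uniform random point on $\Sp^d$ lies in a given cap, represents that point as a normalized standard Gaussian $g/\|g\|_2$ in $\IR^{d+1}$, and observes that the resulting condition is $g_1/\sqrt{S_{d+1}}\ge\alpha$ with $S_{d+1}=\frac{1}{d+1}\sum_{i\ge 2}g_i^2$. The law of large numbers plus Slutsky's theorem then immediately yields the Gaussian tail $\IP[g_1\ge\alpha]$. The first ``$\sim$'' is handled separately via $\arccos x=\pi/2-x-O(x^3)$.

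Each approach has its merits. Yours is entirely self-contained within the analytic framework already set up in Section~\ref{subsec:volume} (it uses only~\eqref{capformula} and the Wallis bounds~\eqref{eq:wallis}), and is close in spirit to the proof of Lemma~\ref{lem:volume-bounds}\,(iv); it would also give explicit error terms if one wanted them. The paper's argument is shorter and conceptually explains \emph{why} the Gaussian tail appears, but imports the Gaussian representation of the uniform measure and Slutsky's theorem from outside the paper.
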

\begin{proof}
	The first ``$\sim$'' follows from $\arccos x=\frac{\pi}{2}-x-O(x^3)$ as $x\to 0$. For the second ``$\sim$'' we  interpret $V(\arccos\frac{\alpha}{\sqrt{d+1}})$ as the probability that a uniformly distributed vector on $\Sp^d$ is contained in a cap of radius $\arccos\frac{\alpha}{\sqrt{d+1}}$ which, without loss of generality, is centered at $e_1=(1,0,\dots,0)$. Let $g=(g_1,\dots,g_{d+1})$ be a standard Gaussian vector in $\IR^{d+1}$. Then $\frac{g}{\|g\|_2}$ is uniformly distributed on $\Sp^d$ and
\begin{align*}
\sigma\Big(\Big\{y\in \Sp^d\colon \arccos y_1 \le \arccos \frac{\alpha}{\sqrt{d+1}} \Big\}\Big)
=\IP\Big[\frac{g_1}{\sqrt{S_{d+1}}}\ge \alpha \Big],
\end{align*}
where $S_{d+1}:=\frac{1}{d+1}\sum_{i=2}^{d+1}g_i^2$. By the law of large numbers $S_{d+1}\to 1$ in probability and by the continuous mapping theorem the same holds for $\sqrt{S_{d+1}}$. Then Slutsky's theorem implies that
\[
\IP\Big[\frac{g_1}{\sqrt{S_{d+1}}}\ge \alpha \Big]
\to \IP[g_1\ge \alpha ]
=\frac{1}{\sqrt{2\pi}}\int_{\alpha}^{\infty}e^{-x^2/2}\dd x.
\]
This shows the claimed asymptotic for $V(\arccos\frac{\alpha}{\sqrt{d+1}})$.
\end{proof}

\begin{remark}\label{remtwo}
The asymptotic relation \eqref{eq:cross-disp} for $\disp_{\cC}(2d+2,d)$ directly follows from Lemma~\ref{lem:vol-gauss}.
\end{remark}

We finally provide a standard estimate on the cardinality of geodesic $\varepsilon$-nets on the sphere.
Recall that  a set of points $\mathcal{N}\subset \Sp^d$ forms a geodesic $\varepsilon$-net if for every $x\in \Sp^d$ there is
$y\in \mathcal{N}$ such that $\varrho(x,y)\le \varepsilon$, or in other words, the geodesic covering radius defined in the next subsection satisfies $\varphi(\mathcal{N})\le \varepsilon$. Note that for every $x, y\in \Sp^d$ one has $|x-y|=2\sin (\varrho(x,y)/2)$. The existence of Euclidean or geodesic $\varepsilon$-nets on the sphere $\Sp^d$ of cardinality $(C/\eps)^d$ for some absolute constant $C$ is well-known and follows from the volume argument. We provide a proof with explicit constants  for completeness.  

\begin{lem}\label{lem:e-net}
For every $d\geq 1$ and $\varepsilon\in (0,\pi)$ there is a geodesic $\varepsilon$-net $\mathcal{N}\subset \Sp^d$ of cardinality
  at most $2(\pi/\varepsilon)^d$.
\end{lem}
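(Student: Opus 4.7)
The plan is a standard volume-packing argument, where the target constant $2$ (rather than $2^d$) in front of $(\pi/\varepsilon)^d$ comes from a careful choice of the comparison radius in part (v) of Lemma~\ref{lem:volume-bounds}.

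First I would take $\mathcal{N}=\{x_1,\dots,x_N\}\subset \Sp^d$ to be a maximal subset with the property that $\varrho(x_i,x_j)>\varepsilon$ for all $i\neq j$. Such a set exists by Zorn's lemma (or a straightforward greedy/compactness argument). By maximality, for every $x\in \Sp^d$ there must be some $x_i\in \mathcal{N}$ with $\varrho(x,x_i)\le \varepsilon$, for otherwise $\mathcal{N}\cup\{x\}$ would still be $\varepsilon$-separated. Hence $\mathcal{N}$ is a geodesic $\varepsilon$-net.

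Next I would control $N$ by a packing argument. The open geodesic caps $B(x_i,\varepsilon/2)$ are pairwise disjoint: if some point $y$ lay in two of them, then by the triangle inequality for $\varrho$, $\varrho(x_i,x_j)<\varepsilon$, contradicting the $\varepsilon$-separation. Consequently $N\cdot V(\varepsilon/2)\le \sigma(\Sp^d)=1$.

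Finally, to turn this into the desired bound I would apply Lemma~\ref{lem:volume-bounds}(v) with $\delta=\varepsilon/2$ and $w=\pi/2$, which is permissible since $\varepsilon\in(0,\pi)$ gives $0<\varepsilon/2<\pi/2\le \pi$. This yields $V(\pi/2)/V(\varepsilon/2)\le (\pi/\varepsilon)^d$, and since $V(\pi/2)=1/2$, we obtain $V(\varepsilon/2)\ge \tfrac{1}{2}(\varepsilon/\pi)^d$, whence $N\le 2(\pi/\varepsilon)^d$. There is no serious obstacle here; the only delicate point is resisting the temptation to compare against $w=\pi$ (which would cost a factor $2^d$) and instead comparing against the hemisphere, where $V(w)=1/2$ absorbs exactly the right constant.
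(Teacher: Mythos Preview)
Your proof is correct and in fact cleaner than the paper's. Both arguments begin with the same maximal $\varepsilon$-separated set and the packing inequality $N\cdot V(\varepsilon/2)\le 1$; the difference is in bounding $V(\varepsilon/2)$ from below. The paper splits into several cases ($\varepsilon\ge\pi/2$ handled by an antipodal pair, $d=2$ by direct computation, $d\ge 3$ via the pointwise lower bound of Lemma~\ref{lem:volume-bounds}(i) together with $\sin(\varepsilon/2)\ge\sqrt{2}\,\varepsilon/\pi$). You instead invoke the comparison inequality of Lemma~\ref{lem:volume-bounds}(v) against the hemisphere $w=\pi/2$, obtaining $V(\varepsilon/2)\ge\tfrac12(\varepsilon/\pi)^d$ in a single step with no case distinctions; the observation that comparing against $w=\pi/2$ rather than $w=\pi$ exactly absorbs the constant is nice. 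The only technicality is that Lemma~\ref{lem:volume-bounds} is stated for $d\ge 2$, so you should add a word that the case $d=1$ is immediate (there $V(\varphi)=\varphi/\pi$, so $N\le 2\pi/\varepsilon$ directly).
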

\begin{proof}
For $d=1$ this is straightforward, so let $d\ge 2$. If $\varepsilon\ge \frac{\pi}{2}$, then choose $\mathcal{N}=\{x,-x\}$ for some $x\in \Sp^d$.

For $\varepsilon\in (0,\frac{\pi}{2})$ we use the standard volume argument. Let $\mathcal{N}\subset \Sp^d$ be a maximal set with $\varrho(x,y)>\varepsilon$ for $x,y\in \mathcal{N}$. Then $\mathcal{N}$ is an $\varepsilon$-net. Since the interiors of balls $B(x,\frac{\varepsilon}{2})$, $x\in \mathcal{N}$, have to be pairwise disjoint, we have
\begin{equation} \label{eq:net}
1=\sigma(\Sp^d)\ge \sum_{x\in \mathcal{N}}\sigma(B(x,\varepsilon/2))=|\mathcal{N}|\, V(\varepsilon/2).
\end{equation}
If $d=2$, then $V(\varepsilon/2)=\frac{1}{2}(1-\cos \frac{\varepsilon}{2})\ge \frac{1}{2}(\frac{\varepsilon}{\pi})^2$, which proves the claim in this case. If $d\ge 3$, we use Lemma~\ref{lem:volume-bounds} (i) and $\sin\frac{\varepsilon}{2}\ge \frac{\sqrt{2}\varepsilon}{\pi}$, $\varepsilon\in (0, \frac{\pi}{2})$, to obtain that
\[
V(\varepsilon/2)
\ge \frac{1}{\sqrt{2\pi(d+1)}}\sin^d(\varepsilon/2)
\ge \sqrt{\frac{2^{d-1}}{\pi(d+1)}}\Big(\frac{\varepsilon}{\pi}\Big)^d
\ge \frac{1}{2}\Big(\frac{\varepsilon}{\pi}\Big)^d.
\]
Combining this with \eqref{eq:net} completes the proof.
\end{proof}

\subsection{Dispersion and covering (proof of Lemma~\ref{newlemma})}
\label{subsec:disp-covering}

Here we prove Lemma~\ref{newlemma}, which provides relations
between minimal dispersion, density, and covering radius, all of which quantify
the maximum efficiency of placing points on the sphere.
Recall that by definition, given a point set $P=\{x_1,\dots,x_n\}\subset\Sph$, its spherical cap dispersion is
\[
\disp_{\cC}(P)
=\sup_{C\in \cC}\{\sigma(C)\colon C\cap P=\emptyset\}.
\]
With a slight abuse of notations we denote the geodesic covering radius by
\[
\varphi(P)
=\inf\Big\{\varphi>0\colon \bigcup_{i=1}^n B(x_i,\varphi)=\Sp^d\Big\}
=\sup\{\varphi>0\colon \exists x\in \Sp^d \text{ with }B(x,\varphi)\cap P=\emptyset\}.
\]
Thus, the geodesic covering radius of $P$ equals the geodesic radius of the largest (open)
ball which does not intersect $P$. Consequently,
\begin{equation} \label{eq:disp-V}
\disp_{\cC}(P)
=\sup\big\{V(\varphi)\colon B(x,\varphi)\cap P=\emptyset, x\in \Sp^d, \varphi\in (0,\pi) \big\}
=V(\varphi(P)).
\end{equation}
Taking the infimum over all $n$-point sets $P_n\subset \Sp^d$ on both sides of this equality and using the continuity of
the function $V(\cdot)$ we observe that
\begin{equation} \label{eq:disp-covering-density-1}
\disp_{\cC}(n,d)
=\inf_{P_n\subset \Sph}\disp_{\cC}(P_n)
=V\big(\inf_{P_n\subset \Sph}\varphi(P_n)\big)
=V(\varphi(n)),
\end{equation}
which proves one part of \eqref{eq:disp-covering-density}.

For the relation to minimal density, note that  definitions of $\dens(n,d)$ and $\varphi(n)$
 immediately imply that for every $\varphi>\varphi(n)$ one has $\dens(n,d)\leq n V(\varphi)$ and
for every $\varphi<\varphi(n)$ one has $\dens(n,d)\geq n V(\varphi)$. These two facts together with
 the continuity of $V(\cdot)$ yield $\dens(n,d)= n V(\varphi(n))$, which completes the proof.
\qed

\subsection{Dispersion and convex body approximation}\label{subsec:approximation}

In this section we present a useful connection between minimal dispersion and best approximation of convex bodies by polytopes. It is well-known that covering $\Sp^d$ by geodesic balls of equal radius is related to approximating the $(d+1)$-dimensional Euclidean unit ball $B^{d+1}$ by the convex hull $\conv(P)$ of the set of centers $P$.
Indeed, it is not difficult to see that the largest ball centered at the origin and contained
in $\conv(P)$ has radius $\cos \varphi(P)$, where $\varphi(P)$ is the geodesic covering radius of $P$ defined above
(see e.g. \cite[Lemma~7.1]{BW03}).  This implies
\begin{equation} \label{eq:hausdorff-cos}
\delta_H(\conv(P),B^{d+1}) =1-\cos \varphi(P),
\end{equation}
where the Hausdorff distance between convex sets $K,L\subset \IR^{d+1}$ is given by
\[
\delta_H(K,L)=\max\Big\{\sup_{x\in K}\inf_{y\in L}\|x-y\|,\,\, \sup_{x\in L}\inf_{y\in K}\|x-y\|\Big\}.
\]
From \eqref{eq:disp-V} and \eqref{eq:hausdorff-cos} we obtain that, for any $n$-point set $P_n\subset \Sp^d$,
\[
\disp_{\cC}(P_n)
=V(\varphi(P_n))
=f(\delta_H(\conv(P_n),B^{d+1})),
\]
with $f(x)=V(\arccos(1-x))$ for $x\in (0,\pi)$. Taking the infimum over all $n$-point sets $P_n\subset\Sp^d$ on both sides and using continuity of $f$ we deduce
\begin{equation} \label{eq:disp-hausdorff}
\disp_{\cC}(n,d)
=f(\delta_H(n,d)),
\end{equation}
where $\delta_H(n,d)$ denotes the error of best approximation of $B^{d+1}$ (in the Hausdorff distance) by polytopes with at most $n$ vertices on $\Sp^d$.

The relation \eqref{eq:disp-hausdorff} allows to transfer bounds for $\delta_H(n,d)$ to minimal dispersion, and vice versa. This will be used below for the proof of Theorem~\ref{thm:disp-covering}. The survey \cite{Bro08} contains many references on the behavior of $\delta_H(n,d)$. Equivalently, one can study the difference $1-\delta_H(n,d)=\cos \varphi(n)$ which is the maximal inradius of a polytope with $n$ vertices inscribed into $\Sp^d$
(see e.g. Lemma 7.1 in \cite{BW03}) and thus related to Banach-Mazur distance between the Euclidean ball and the convex hull $n$ points
on the sphere (see discussion in Section~8 of \cite{BW03}).
In \cite[inequality~(14)]{BW03} lower bounds for $\cos \varphi(n)$ are given for $d+1\le n\le \sqrt{2}^d$.
We will need the following upper bound, which follows from a result on volumes of polytopes with $n\geq  2(d+1)$ vertices on the sphere,
proved independently in  \cite{BF88, CP88, Gluskin88} and a result on the Banach-Mazur distance between the Euclidean ball and
a polytope with $d+2\leq n\leq 2(d+1)$ vertices, proved in \cite{Tik15}. We formulate it in terms of $\varphi(n)$
(cf. similar discussion in  Section~8 of \cite{BW03}). It is known that this bound is sharp.

\begin{theorem} \label{eq:tikhomirov}
Let $n\geq d+2$. Then
$$
\cos \varphi(n) \le C \sqrt{\frac{\ln (n/d)}{d}},
$$
where $C>0$ is an absolute constant.
\end{theorem}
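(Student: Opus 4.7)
The plan is to translate the statement into a bound on the inradius of polytopes inscribed in the sphere and then invoke the two classical results cited in the surrounding discussion. The text just before the theorem identifies $\cos\varphi(n)$ with the maximum, over all $n$-point sets $P \subset \Sp^d$, of the inradius (about the origin) of $\conv(P)$, via \eqref{eq:hausdorff-cos} and Lemma~7.1 of \cite{BW03}. So the task reduces to showing that for any $x_1,\dots,x_n \in \Sp^d$, the largest $r \ge 0$ with $r B^{d+1} \subset \conv(x_1,\dots,x_n)$ satisfies $r \le C\sqrt{\ln(n/d)/d}$.

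For the main range $n \ge 2(d+1)$ I would first symmetrize: since $\conv(x_1,\dots,x_n) \subset \conv(\pm x_1,\dots,\pm x_n) =: Q$, the inclusion $r B^{d+1} \subset \conv(x_1,\dots,x_n)$ yields $r B^{d+1} \subset Q$, hence $r^{d+1}\vol_{d+1}(B^{d+1}) \le \vol_{d+1}(Q)$. Next I would invoke the classical volume estimate of B\'ar\'any--F\"uredi, Carl--Pajor, and Gluskin \cite{BF88, CP88, Gluskin88}, which in this setting asserts that for $n \ge 2(d+1)$ and any $x_1,\dots,x_n \in B^{d+1}$,
\[
\vol_{d+1}\bigl(\conv(\pm x_1,\dots,\pm x_n)\bigr) \le \Bigl(C\sqrt{\tfrac{\ln(n/d)}{d}}\Bigr)^{d+1}\vol_{d+1}(B^{d+1}).
\]
Combined with the previous inequality this yields $r \le C\sqrt{\ln(n/d)/d}$.

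For the small range $d+2 \le n < 2(d+1)$ the symmetric volume bound is too weak: at $n=d+2$ it only gives $r \lesssim 1/\sqrt{d}$, whereas the true extremal value (attained by the regular simplex) is $1/(d+1)$, consistent with $\ln(n/d) = \ln(1+2/d) \sim 2/d$. Here I would instead appeal to Tikhomirov's result \cite{Tik15}, which provides a sharp lower bound of order $\sqrt{d/\ln(n/d)}$ on the Banach--Mazur distance between $B^{d+1}$ and any polytope with at most $n$ vertices in this range. Since the inclusions $r B^{d+1} \subset \conv(x_1,\dots,x_n) \subset B^{d+1}$ force that Banach--Mazur distance to be at most $1/r$, the lower bound translates directly into $r \le C\sqrt{\ln(n/d)/d}$.

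The substantive content lies entirely in the two cited results; the present argument is the reduction via \eqref{eq:hausdorff-cos} together with the symmetrization trick. The only bookkeeping that requires attention is constant-tracking: after symmetrization one obtains a polytope with up to $2n$ vertices, so the cited volume bound superficially yields $\ln(2n/d)$ rather than $\ln(n/d)$ in the square root, but $\ln(2n/d) \le 2\ln(n/d)$ when $n \ge 2d$ — which covers the whole range where the volume bound is used — and the resulting factor is absorbed into $C$.
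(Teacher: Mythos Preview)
Your proposal is correct and follows precisely the approach indicated in the paper, which does not give a detailed proof but simply states that the bound follows from the volume estimates of \cite{BF88, CP88, Gluskin88} for $n\ge 2(d+1)$ and from Tikhomirov's Banach--Mazur distance bound \cite{Tik15} for $d+2\le n\le 2(d+1)$. Your reduction via the identification of $\cos\varphi(n)$ with the maximal inradius and the symmetrization step correctly fill in the details left implicit there.
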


\section{Proofs of statements}
In the following, we provide the proofs of all Theorems and Propositions in order of appearance with four  exceptions. Proposition~\ref{pro:upper-vc} and Theorem~\ref{newthm} are proved in Section~\ref{sec:vc}, while Propositions~\ref{pro:upper-net} and \ref{thm:upper-net-intersect} are proved in Section~\ref{sec:test-sets}.

\begin{proof}[Proof of Theorem~\ref{thm:disp-covering}]
Recall that $\delta_H(n,d)$ denotes the error of best approximation of $B^{d+1}$
 by polytopes with at most $n$ vertices on $\Sp^d$ and that by \eqref{eq:disp-hausdorff},
\[
\disp_{\cC}(n,d)=V(\arccos(1-\delta_H(n,d))).
\]
On the other hand it is well known that
\begin{equation} \label{eq:best-app}
\delta_{H}(n,d)
=\frac{1+o(1)}{2}\Big(\frac{\vol_d(\Sp^d)}{\vol_d(B^d)}\vartheta_{d}\Big)^{2/d}n^{-2/d}\qquad\text{as }n\to\infty,
\end{equation}
where $\vartheta_d$ is the minimal covering density given in \eqref{eq:min-covering-density} (see e.g. surveys \cite[eq.~(4) and (5)]{Gruber93} and
 \cite[eq.~(7)]{Bro08} and references therein, note that this result goes back to \cite{Sch81}).
Combining the asymptotics $\arccos(1-x)=\sqrt{2x}+O(x^{3/2})$ as $x\to 0$ and
\[
V(\varphi)
=\frac{d\vol_d(B^d)}{\vol_d(\Sp^d)}\int_0^{\varphi}\sin^{d-1}t\dd t
= (1+o(1))\frac{\vol_d(B^d)}{\vol_d(\Sp^d)}\varphi^d \qquad \text{as }\varphi\to 0
\]
 with \eqref{eq:best-app}, we obtain
\[
\lim_{n\to\infty}n\cdot\disp_{\cC}(n,d)
= \vartheta_d \qquad \text{as }n\to\infty,
\]
as claimed.
\end{proof}

\begin{rem}
Note that in \cite[eq.~(13)]{BW03} a slightly less precise asymptotic for $\delta_H(n,d)$
than the one in \eqref{eq:best-app} is derived from the covering bound  \eqref{eq:cor-1.2}.
\end{rem}

In  the next three proofs we will use the following lemma. Recall that $V(\varphi)$ denotes the normalized Lebesgue measure of a cap of geodesic radius $\varphi$ and, given a cover $\mathcal{B} =\{B_1, B_2, ..., B_m\}$ of the sphere $\Sp^d$ the density of this cover is $\sum_{i=1}^m \sigma (B_i)$. When the cover $\mathcal{B}$ contains only spherical caps of the same radius $\varphi$, we say that $\mathcal{B}$ is a $\varphi$-cover and denote its density by $\dens (\varphi,\mathcal{B})$. Clearly, $\dens (\varphi,\mathcal{B})=m V(\varphi)$.

\begin{lemma} \label{newdens}
	Let $d\geq 2$, $\varphi_0\in (0, \pi/2]$, and let $D: (0, \varphi_0]\to [1, \infty)$ be a continuous decreasing function. Assume that for every $\varphi \in  (0, \varphi_0]$ there exists a $\varphi$-cover $\mathcal{B}$ of the sphere with density $\dens (\varphi,\mathcal{B})\leq D(\varphi)$.
 Then for every $n\geq D(\varphi_0)/V(\varphi_0)$ one has $n \disp_{\cC}(n,d)\leq D(\varphi(n)).$
\end{lemma}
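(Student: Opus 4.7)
The plan is to reduce, via Lemma~\ref{newlemma}, the target inequality $n\cdot\disp_{\cC}(n,d)\le D(\varphi(n))$ to showing $nV(\varphi(n))\le D(\varphi(n))$, and then to locate a well-tuned radius $\varphi^{*}\in(0,\varphi_0]$ at which the hypothesized cover has cardinality at most $n$.

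First I would analyze the auxiliary function $g(\varphi):=D(\varphi)/V(\varphi)$ on $(0,\varphi_0]$. Since $D$ is continuous and decreasing, $V$ is continuous and strictly increasing with $V(\varphi)\to 0$ as $\varphi\to 0^{+}$, and $D\ge 1$, the function $g$ is continuous, strictly decreasing, and $g(\varphi)\to\infty$ as $\varphi\to 0^{+}$. Hence $g$ is a continuous bijection from $(0,\varphi_0]$ onto $[D(\varphi_0)/V(\varphi_0),\infty)$.

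For $n\ge D(\varphi_0)/V(\varphi_0)$, the intermediate value theorem produces $\varphi^{*}\in(0,\varphi_0]$ with $nV(\varphi^{*})=D(\varphi^{*})$. The hypothesis applied at $\varphi^{*}$ gives a $\varphi^{*}$-cover $\mathcal{B}$ whose cardinality $m$ satisfies $mV(\varphi^{*})=\dens(\varphi^{*},\mathcal{B})\le D(\varphi^{*})=nV(\varphi^{*})$, so $m\le n$. Padding $\mathcal{B}$ with $n-m$ additional caps of radius $\varphi^{*}$ (centered anywhere) still yields a cover of $\Sp^{d}$, whence $\varphi(n)\le \varphi^{*}$ by the definition of $\varphi(n)$.

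From $\varphi(n)\le \varphi^{*}$, monotonicity of $V$ gives $nV(\varphi(n))\le nV(\varphi^{*})=D(\varphi^{*})$, and monotonicity of $D$ gives $D(\varphi^{*})\le D(\varphi(n))$. Combined with the identity $n\cdot\disp_{\cC}(n,d)=nV(\varphi(n))$ from Lemma~\ref{newlemma}, this concludes the argument. No serious obstacle is expected here; the whole statement is essentially a one-step interpolation via IVT, and the quantitative hypothesis $n\ge D(\varphi_0)/V(\varphi_0)$ is exactly what ensures that $g$ attains the value $n$ somewhere inside $(0,\varphi_0]$.
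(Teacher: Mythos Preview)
Your proof is correct and follows essentially the same route as the paper's own argument: both locate, via the intermediate value theorem, a radius $\varphi^{*}\in(0,\varphi_0]$ (the paper calls it $\psi_n$) with $nV(\varphi^{*})=D(\varphi^{*})$, deduce that the guaranteed $\varphi^{*}$-cover has at most $n$ caps so that $\varphi(n)\le\varphi^{*}$, and then chain $nV(\varphi(n))\le nV(\varphi^{*})=D(\varphi^{*})\le D(\varphi(n))$ together with Lemma~\ref{newlemma}.
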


\begin{proof}
Let $n\geq D(\varphi_0)/V(\varphi_0)$. Since $1/V$ and $D$ are continuous decreasing functions with $D\geq 1$ and
$V(\varphi)\to 0$ as $\varphi \to 0$, there exists a
$\psi_n  \in  (0, \varphi_0]$ such that $n= D(\psi_n)/V(\psi_n)$. By the definition of  $\varphi$-cover and by the
assumptions, there exists $\psi_n$-cover $\mathcal{B}$ of  the sphere by $m$ spherical caps of radius $\psi_n$ with the density
$$
m V(\psi_n) = \dens (\psi_n,\mathcal{B}) \leq D(\psi_n) = n V(\psi_n).
$$
Thus, $m\leq n$ so we also have a cover of the sphere by $n$  spherical caps of radius $\psi_n$ (just by adding caps
if needed). This implies that $\varphi(n)\leq \psi_n$. Therefore, using Lemma~\ref{newlemma}, we obtain
$$
  n \disp_{\cC}(n,d) = n V(\varphi(n)) \leq  n V(\psi_n) = D(\psi_n)\leq D(\varphi(n)),
$$
which completes the proof.
\end{proof}

\begin{proof}[Proof of Proposition~\ref{pro:400}]
By \cite[Theorem~1.1]{BW03}, for every $\varphi\le \frac{\pi}{2}$,
there exist centers $x_1,\dots,x_{m}\in \Sp^d$ such that every point of $\Sp^d$
is contained in at least one of the balls in $\mathcal{B}=\{B(x_1,\varphi),\dots,B(x_{m},\varphi)\}$
and in at most $400 d\ln d$ many. In \cite{BW03} this is stated for $d\ge 3$
but the proof works also for $d=2$. This implies that for every $\varphi\le \frac{\pi}{2}$
\begin{align*}
	\dens (\varphi,\mathcal{B}) = m\cdot V(\varphi)
&=\sum_{i=1}^{m}\int_{\Sp^d}1_{B(x_i,\varphi)}(y)\dd\sigma(y)
\\&=
\int_{\Sp^d}\sum_{i=1}^{m}1_{B(x_i,\varphi)}(y)\dd\sigma(y)
\le 400d\ln d.
\end{align*}
Applying Lemma~\ref{newdens} with the constant function
$D(\varphi)=400d\ln d$ on $(0, \pi/2]$, we obtain
the desired bound for
$n\geq \frac{400d\ln d}{V(\pi/2)}=800d\ln d.$ The case
$ n< 800d\ln d$  follows from the trivial bounds  $\disp_{\cC}(n,d)\le \frac{1}{2}$ ($n\geq 2$) 
and $\disp_{\cC}(1,d)=1$ (cf., (\ref{triv-disp})). 
\end{proof}

\begin{proof}[Proof of Proposition~\ref{pro:lnlnd}]
Denote $A_d=d\ln d+d\ln \ln d+5d$.
Remark~5.1 and the end of its proof in \cite{BW03} (see also \cite[Theorem~2.2]{Nas16})
imply that for every $\varphi\le \frac{\pi}{2}$ there is a $\varphi$-cover $\mathcal{B}$ with $\dens (\varphi,\mathcal{B})\leq A_d$.  Similarly to the proof of Proposition~\ref{pro:400},
using Lemma~\ref{newdens} with the constant function $D(\varphi)=A_d$ for $n\geq 2 A_d$
and $\disp_{\cC}(n,d)\le \frac{1}{2}$ for $2\le n< 2 A_d$, $\disp_{\cC}(1,d)=1$ we complete the proof.
\end{proof}

\begin{proof}[Proof of Theorem~\ref{pro:lnln-upper}]
Let $\varphi_0:=\arccos \frac{1}{\sqrt{d+1}}$. Corollary~1.2 in \cite{BW03} states that
$$\dens(\varphi) \leq c d \ln(1+d\cos^2\varphi):=D(\varphi)$$ on $(0, \varphi_0]$,
where $c>0$ is an absolute constant. Note that $D(\varphi_0)\leq c d \ln 2$ and
that  by   Lemma~\ref{lem:volume-bounds} (iii),
$$
  \frac{1}{3e\sqrt{2\pi}}<V(\varphi_0)<\frac{1}{2}.
$$
Applying Lemma~\ref{newdens} and Theorem~\ref{eq:tikhomirov} we obtain for every
$n\geq c_1 d$,
$$
   n \disp_{\cC}(n,d)\leq c d \ln(1+d\cos^2(\varphi(n)))\leq
    \frac{c\,d}{n} \ln\Big(1+C^2\ln \frac{n}{d}\Big)
\leq  \frac{c_2\,d}{n} \ln \ln \frac{n}{d},
$$
where $C, c_1, c_2\geq 2$ are absolute constants.
This completes the proof for $n\ge c_1\, d$. For $2d\le n\le c_1\, d$
 we use the trivial bound
\[
\disp_{\cC}(n,d)
\le 1
\le \frac{C_2\,d}{n} \ln \ln \frac{2n}{d},
\]
for a large enough absolute constant $C_2$.
\end{proof}

\begin{proof}[Proof of Corollary~\ref{cor:net}]
	Let $\varepsilon\in (0,\pi/2)$ and let $n$ be the minimal cardinality of a geodesic $\varepsilon$-net. 
Note that  $V(\varepsilon)\le 1/2$. If $n\leq 2d$ we are done, so we assume $m:=n-1\geq 2d$. 
By the definition of the minimal covering radius we have 
$V(\varphi(n))\le V(\varepsilon)\le V(\varphi(m))$, therefore Theorem~\ref{pro:lnln-upper} and Lemma~\ref{newlemma} 
yield 
\[
\frac{m}{d}= \frac{m \disp_{\cC}(m,d)}{d\, V(\varphi(m))}
\le\frac{C\ln\ln(2m/d)}{V(\varphi(m))}
\le\frac{C\ln\ln(2m/d)}{V(\varepsilon)}.
\]
This implies 
\[
\frac{m}{d}\le C_1\frac{\ln\ln(2/V(\varepsilon))}{V(\varepsilon)}
\]
for a large enough absolute constant $C_1>0$. Since $n\leq 2m$, the desired bound follows.
\end{proof}

\begin{proof}[Proof of Theorem~\ref{pro:lower}]
The following proof is based on \cite[Example~6.3]{BW03} where the lower bound
by Coxeter-Few-Rogers  \cite{CFR59} (the left hand side of \eqref{eq:density-bounds})
is adapted to the sphere. More precisely, in \cite[Example~6.3]{BW03} it was shown that
for any finite covering $\cB$ of $\Sp^d$ by caps of equal geodesic radius
$\varphi\le \arcsin \sqrt{\frac{1}{d+1}}$ the density satisfies $\dens(\cB)\ge c\, d$
with an absolute constant $c>0$ (although not explicitly stated in \cite{BW03},
the case $d=2$ is also covered there). Therefore Lemma~\ref{newlemma} implies that
\begin{equation} \label{eq:lower}
n\cdot V(\varphi(n)) = n\cdot\disp_{\cC}(n,d)=\dens(n,d)
\ge c \, d,
\end{equation}
whenever $n\ge n_d$ for $n_d$ satisfying $\varphi(n_d)\le \arcsin\frac{1}{\sqrt{d+1}}$, that is,
\begin{equation}\label{sinpfi}
 \sin \varphi(n_d)\le \frac{1}{\sqrt{d+1}}.
\end{equation}
  By the volume bound in Lemma~\ref{lem:volume-bounds} (i) and Proposition~\ref{pro:lnlnd} we have
\[
\frac{1}{\sqrt{2\pi(d+1)}}\sin^d\varphi(n_d) \le V(\varphi(n_d))\le \frac{7 d\ln d}{n_d}.
\]
Thus, if  $n_d\ge C d^{(3+d)/2}\ln d$ for some suitable absolute constant $C>0$, then (\ref{sinpfi})
holds, which completes the proof.
\end{proof}

\section{Dispersion bounds via geodesic $\varepsilon$-nets}\label{sec:test-sets}

In this section we provide the proofs of Propositions~\ref{pro:upper-net} and \ref{thm:upper-net-intersect}. For convenience, we mainly use $|\cdot|$ instead of $\sigma(\cdot)$ for the normalized spherical volume and the inverse of the minimal spherical dispersion which is given for $\gamma\in (0,1)$ by
\[
N_{\cM}(\gamma,d):=\min\{n\geq 1 \colon \disp_{\cM}(n,d)\le \gamma\},
\]
where $\cM$ is either $\cC$ or $\cL$, so we use either $N_{\cC}$ or $N_{\cL}$
(but in fact it can be any other family of measurable subsets of $\Sp^d$).

Similarly, for each $\gamma \in (0,1)$ consider
$$\mathcal{M}_\gamma := \{ A \in \mathcal{M}: |A| = \gamma\},$$
where $\cM$ is either $\cC$ or $\cL$, so we use either $\cC_\gamma$ or $\cL_\gamma$.
As in \cite{AL24,Lit21} we call a finite family $\mathcal{N} \subset \mathcal{M}$ a $(c_0 \gamma)$-approximation for $\mathcal{M}_\gamma$ with $c_0 \in (0,1)$, if for every $A \in \mathcal{M}_\gamma$ there exists $B \in \mathcal{N}$ such that $B \subset A$ and $|B| \geq c_0\gamma$. Repeating the proof of
 \cite[Lemma~2.3]{Lit21} (see also \cite[Theorem~1]{Rud18}) in our setting we obtain the following.

\begin{lemma} \label{lemma}
Let $d\geq 2$ and let  $\cM$ be either $\cC$ or $\cL$. Let $c_0\in (0,1)$ and $\gamma \in (0,1)$. If $\mathcal{N}\subset \mathcal{M}$ is a $(c_0\gamma)$-approximation for $\mathcal{M}_{\gamma}$ and if $|\mathcal{N}|\geq 3$, then
	$$N_{\cM}(\gamma, d) \leq \frac{3\ln |\mathcal{N}|}{c_0\gamma}.$$
Moreover, the result holds for the random choice of points.
\end{lemma}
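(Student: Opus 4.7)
The plan is to prove the lemma by the standard probabilistic first-moment argument. I would take $n=\lceil 3\ln|\mathcal{N}|/(c_0\gamma)\rceil$ independent uniformly distributed points $X_1,\dots,X_n$ on $\Sp^d$, which will automatically give the ``moreover'' statement about the random choice of points. Since every $B\in\mathcal{N}$ satisfies $|B|\ge c_0\gamma$ by the approximation hypothesis, one has
\[
\Pr[B\cap\{X_1,\dots,X_n\}=\emptyset]=(1-|B|)^n\le e^{-c_0\gamma n}\le |\mathcal{N}|^{-3}.
\]
A union bound over $\mathcal{N}$ then shows that with probability at least $1-|\mathcal{N}|^{-2}\ge 8/9$ (using $|\mathcal{N}|\ge 3$) every element of $\mathcal{N}$ contains at least one of the $X_i$.

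On this good event I would verify that $\disp_{\cM}(\{X_1,\dots,X_n\})\le\gamma$. By definition of dispersion, it suffices to show that any $A\in\cM$ with $|A|>\gamma$ meets $\{X_1,\dots,X_n\}$. The key auxiliary step is to produce, for such $A$, an element $A'\in\cM_\gamma$ with $A'\subset A$: if $\cM=\cC$, shrink the geodesic radius of $A$ until its normalized area equals $\gamma$; if $\cM=\cL$, write $A=C_1\cap C_2$ and continuously shrink one of the two caps until the intersection has area exactly $\gamma$. Applying the $(c_0\gamma)$-approximation property to $A'$ yields $B\in\mathcal{N}$ with $B\subset A'\subset A$, and the good event provides some $X_i\in B\subset A$.

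The main obstacle is precisely the reduction from an arbitrary $A\in\cM$ to a sub-element lying in $\cM_\gamma$; this is immediate for caps but requires a brief continuity argument for lenses. Everything else is a routine union-bound calculation, analogous to the arguments used in the cube setting in \cite{Lit21, Rud18, AL24}.
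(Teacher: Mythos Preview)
Your approach is correct and coincides with the standard probabilistic argument the paper invokes by citing \cite{Lit21,Rud18}. Two cosmetic fixes: the hypothesis only guarantees $|B|\ge c_0\gamma$ for the approximants of elements of $\cM_\gamma$, not for all $B\in\mathcal{N}$ (so take the union bound over those, or discard the rest of $\mathcal{N}$ beforehand), and to avoid the ceiling use $n=\lfloor 3\ln|\mathcal{N}|/(c_0\gamma)\rfloor$, for which the union-bound probability $|\mathcal{N}|e^{-c_0\gamma n}\le e|\mathcal{N}|^{-2}\le e/9<1$ still suffices.
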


It is worth mentioning that recently Arman and Litvak in \cite[Lemma~3.3]{AL24} proved a variation of the above result,
which again transfers to our setting.
\begin{lemma}
\label{lemma2}
Let  $d\geq 2$ and let  $\cM$ be either $\cC$ or $\cL$.
Let $c_0\in (0,1)$ and $\gamma \in (0,\frac{1}{3c_0})$. If $\mathcal{N}\subset \mathcal{M}$ is a $(c_0\gamma)$-approximation for $\mathcal{M}_{\gamma}$ and if $|\mathcal{N}|\geq \frac{e}{c_0 \gamma}$, then
$$N_{\cM}(\gamma, d) \leq \frac{\ln(4c_0\gamma|\mathcal{N}|)}{c_0\gamma}.$$
\end{lemma}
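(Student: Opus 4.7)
The plan is to follow the semi-random strategy of Arman--Litvak~\cite{AL24}: combine $n_1$ i.i.d.\ uniform points on $\Sph$ with a few adaptively chosen correction points to intersect every set in $\mathcal{N}$. Write $\alpha:=c_0\gamma$ and $N:=|\mathcal{N}|$ throughout.

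For the random step, I would sample $X_1,\dots,X_{n_1}$ i.i.d.\ with distribution $\sigma$ on $\Sph$ and let $Z:=\#\{B\in\mathcal{N}\colon B\cap\{X_1,\dots,X_{n_1}\}=\emptyset\}$. Since $\sigma(B)\ge\alpha$ for every $B\in\mathcal{N}$, linearity of expectation together with $1-t\le e^{-t}$ gives $\mathbb{E}[Z]\le N(1-\alpha)^{n_1}\le Ne^{-\alpha n_1}$; in particular some realisation satisfies $Z\le Ne^{-\alpha n_1}$. Fix such a realisation and adjoin one point from each empty $B\in\mathcal{N}$; the resulting set $P\subset\Sph$ has $|P|\le n_1+Z$ and meets every $B\in\mathcal{N}$.

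To deduce $\disp_\cM(P)\le\gamma$, I would observe that every $A\in\cM$ with $\sigma(A)>\gamma$ contains a subset $A'\in\cM_\gamma$: for caps, decrease the geodesic radius continuously, and for lenses $A=C_1\cap C_2$, shrink one of the two caps until the measure drops to exactly $\gamma$, by the intermediate value theorem. By the $(c_0\gamma)$-approximation hypothesis applied to $A'$, there is $B\subset A'$ in $\mathcal{N}$ with $\sigma(B)\ge\alpha$, and since $B\cap P\ne\emptyset$, so is $A\cap P$. Hence $N_\cM(\gamma,d)\le |P|$.

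The last step is parameter optimisation. I would set $n_1:=\lceil\ln(\alpha N)/\alpha\rceil$, which is a positive integer thanks to the hypothesis $N\ge e/\alpha$. Then $Ne^{-\alpha n_1}\le 1/\alpha$, and since $Z$ is integer, $Z\le\lfloor 1/\alpha\rfloor$, giving $|P|\le(\ln(\alpha N)+\alpha+1)/\alpha$. The target bound $\ln(4\alpha N)/\alpha$ therefore reduces to the elementary inequality $\alpha+1\le\ln 4$, which holds because the hypothesis $\gamma<\frac{1}{3c_0}$ forces $\alpha<1/3<\ln 4-1$. The main obstacle is thus not conceptual but arithmetic: the factor $4$ inside the logarithm is exactly what is required to absorb the two integer-rounding losses (one for $n_1$, one for $Z$), and the two hypotheses $N\ge e/\alpha$ and $\gamma<1/(3c_0)$ are precisely tuned so that this bookkeeping closes.
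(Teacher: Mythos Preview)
Your argument is correct and follows exactly the semi-random method of Arman--Litvak \cite[Lemma~3.3]{AL24} that the paper itself invokes (the paper does not give an independent proof but states that the result ``transfers to our setting'' from \cite{AL24}). The bookkeeping with $n_1=\lceil\ln(\alpha N)/\alpha\rceil$, $Z\le 1/\alpha$, and the verification $\alpha+1<4/3<\ln 4$ is clean and matches the intended proof.
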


 We would like to emphasize that the proof of Lemma \ref{lemma} is based on a random choice of points,
while the proof of Lemma \ref{lemma2} introduces an additional non-random step.

In order to estimate $N_{\cM}(\gamma,d)$ from above for some $\gamma\in (0,1)$, one can construct a
$(c_0\gamma)$-approximation with small cardinality and $c_0=\frac{1}{2}$, say. We provide a quick summary
of a construction of such an approximation $\mathcal{N}$ for $\mathcal{M}_\gamma$. Since every cap
$C(v,\delta)$ from $\mathcal{M}_\gamma$ is uniquely determined by its center $v$, we start with an
$\varepsilon$-net $\mathcal{N}_0$ for $\Sp^d$ and consider $\mathcal{N}$ to be the family of caps centered at
 points from $\mathcal{N}_0$ with the radius $\delta - \varepsilon$. This guarantees that every
 cap from $\mathcal{M}_\gamma$ contains a member of $\mathcal{N}$. Moreover, by choosing $\varepsilon$
 sufficiently small we ensure that their volumes are comparable.

As a tool, we  need the following lemma which will be applied first for the family $\cC$ and later on for $\cL$.

\begin{lem}\label{lem:volume-radius}
	Let  $d\geq 2$,   $\gamma\in (0,1)$ and $A=B(v_1, \delta_1) \cap B(v_2, \delta_2)$ with some $v_1,v_2\in \Sp^d$.
Let  $\delta_1,\delta_2\in (0,\pi]$ be such that $|A|=\gamma$. Then $\delta=\min\{\delta_1,\delta_2\}\ge \gamma$.
\end{lem}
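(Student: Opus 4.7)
The plan is to reduce the claim $\delta \ge \gamma$ to the elementary inequality $V(\varphi) \le \varphi$ for every $\varphi \in (0,\pi]$.

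First I would note that $A \subseteq B(v_1,\delta_1) \cap B(v_2,\delta_2)$, so by monotonicity of the spherical measure,
\[
\gamma = |A| \le \min\{V(\delta_1), V(\delta_2)\} = V(\delta),
\]
where the last equality uses that $V$ is nondecreasing on $(0,\pi]$ (visible directly from \eqref{capformula}). Thus it is enough to show that $V(\delta) \le \delta$.

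To establish $V(\varphi) \le \varphi$ for all $\varphi \in (0,\pi]$, I would split into two cases. For $\varphi \ge 1$, the inequality is immediate since $V(\varphi) \le V(\pi) = 1 \le \varphi$ (the identity $V(\pi)=1$ follows from \eqref{capformula} and the symmetry $\sin(\pi - t) = \sin t$). For $\varphi < 1$, I would combine the elementary bound $\sin t \le t$, which gives
\[
\int_0^{\varphi} \sin^{d-1} t \, \dd t \le \int_0^{\varphi} t^{d-1}\, \dd t = \frac{\varphi^d}{d} \le \frac{\varphi}{d},
\]
with the Wallis lower bound from \eqref{eq:wallis} in the denominator of \eqref{capformula}, yielding
\[
V(\varphi) \le \frac{\varphi/d}{2\sqrt{\pi/(2d)}} = \frac{\varphi}{\sqrt{2\pi d}} \le \varphi
\]
for every $d \ge 2$. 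Chaining the two displays gives $\gamma \le V(\delta) \le \delta$, which is the claim.

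The argument is short and I do not anticipate a real obstacle: the only mildly delicate point is that the bound $\sin t \le t$ becomes useless near $t = \pi$, which is precisely why I would split at $\varphi = 1$ and dispose of the large-$\varphi$ regime by the trivial estimate $V(\varphi) \le 1$. Note also that the containment $A \subseteq B(v_i,\delta_i)$ is the only property of $A$ being used, so the same argument works unchanged for intersections of any number of caps.
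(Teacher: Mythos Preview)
Your proof is correct and follows essentially the same route as the paper: both observe $\gamma\le V(\delta)$ and then bound $V(\delta)$ above by an elementary function of $\delta$, splitting according to whether $\delta$ is small or large. The only cosmetic differences are that the paper quotes the ready-made bound $V(\varphi)\le\frac12\sin^d\varphi$ from Lemma~\ref{lem:volume-bounds}(i) (splitting at $\pi/2$ and obtaining the slightly sharper intermediate inequality $\delta\ge\gamma^{1/d}$), whereas you work directly from \eqref{capformula} and \eqref{eq:wallis} and split at $1$.
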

\begin{proof}
Assuming $\delta\le\frac{\pi}{2}$	and using the bound in Lemma~\ref{lem:volume-bounds} (i) we get
\[
\gamma
=|A|
\le V(\delta)
\le \frac{\sin^d \delta}{2}
\le \frac{\delta^d}{2}.
\]
Therefore,
\[
\delta\ge \gamma^{1/d} \ge  \gamma.
\]
If $\delta>\frac{\pi}{2}$, then $\delta>1\geq \gamma$, which completes the proof.
\end{proof}

To prove Proposition~\ref{pro:upper-net} we construct a $\frac{\gamma}{2}$-approximation for the family $\cC$.

\begin{lem}\label{lem:test-caps}
Let	 $d\geq 2$. For every $\gamma\in (0,1)$ there exists a $\frac{\gamma}{2}$-approximation
$\mathcal{N}$ for $\cC_{\gamma}$ with cardinality
\[
|\mathcal{N}| \le 2\Big(\frac{3d\pi}{\gamma^2}\Big)^{d}.
\]
\end{lem}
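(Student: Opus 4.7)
The plan is to take $\mathcal{N}$ to be a family of slightly shrunken caps centered at the points of a sufficiently fine geodesic net on $\Sp^d$. A first observation is that every $C\in\cC_\gamma$ has the form $B(v,\delta)$ for some $v\in\Sp^d$, where the radius $\delta$ is the \emph{same} for all caps in $\cC_\gamma$, namely the unique $\delta\in(0,\pi]$ with $V(\delta)=\gamma$. Lemma~\ref{lem:volume-radius}, applied with $v_1=v_2$ and $\delta_1=\delta_2=\delta$, yields the crucial a priori estimate $\delta\ge\gamma$, which determines how much shrinking we can afford.

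Set $\varepsilon:=\gamma^{2}/(3d)$ and use Lemma~\ref{lem:e-net} to obtain a geodesic $\varepsilon$-net $\mathcal{N}_0\subset\Sp^d$ of cardinality at most $2(\pi/\varepsilon)^{d}=2(3\pi d/\gamma^{2})^{d}$. Define
\[
\mathcal{N}:=\{B(v_0,\delta-\varepsilon)\colon v_0\in\mathcal{N}_0\}\subset\cC,
\]
so that the claimed cardinality bound is immediate (note $\delta-\varepsilon>0$ since $\delta\ge\gamma>\varepsilon$). For any $C=B(v,\delta)\in\cC_\gamma$ pick $v_0\in\mathcal{N}_0$ with $\varrho(v,v_0)\le\varepsilon$; the triangle inequality for $\varrho$ then gives $B(v_0,\delta-\varepsilon)\subset B(v,\delta)$, so $\mathcal{N}$ contains an element contained in $C$.

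The main step is the volume estimate $|B(v_0,\delta-\varepsilon)|\ge\gamma/2$. By Lemma~\ref{lem:volume-bounds}(v), $V(\delta)/V(\delta-\varepsilon)\le(\delta/(\delta-\varepsilon))^{d}$, hence
\[
V(\delta-\varepsilon)\ge V(\delta)\Big(1-\tfrac{\varepsilon}{\delta}\Big)^{d}=\gamma\Big(1-\tfrac{\varepsilon}{\delta}\Big)^{d}.
\]
Since $\delta\ge\gamma$ we have $\varepsilon/\delta\le\gamma/(3d)\le 1/(3d)$, so it remains to verify the elementary inequality $(1-1/(3d))^{d}\ge 1/2$, equivalently $1/(3d)\le 1-2^{-1/d}$. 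This follows from the bound $1-e^{-x}\ge x/(1+x)$ (a rearrangement of $e^{x}\ge 1+x$) applied at $x=\ln 2/d$, which gives $1-2^{-1/d}\ge \ln 2/(d+\ln 2)\ge 1/(3d)$ for $d\ge 2$. No step presents a genuine obstacle; the substance of the lemma lies in combining the a priori bound $\delta\ge\gamma$ from Lemma~\ref{lem:volume-radius} with the volume comparison of Lemma~\ref{lem:volume-bounds}(v), which converts an $\varepsilon$-perturbation of the radius into only a constant-factor loss of volume.
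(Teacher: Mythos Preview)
Your proof is correct and follows essentially the same route as the paper: build $\mathcal{N}$ from shrunken caps centered at an $\varepsilon$-net, use the triangle inequality for the inclusion, Lemma~\ref{lem:volume-bounds}(v) for the volume comparison, and $\delta\ge\gamma$ from Lemma~\ref{lem:volume-radius} to close the estimates. The only cosmetic difference is that the paper sets $\varepsilon=\gamma\delta/(3d)$ and invokes $\delta\ge\gamma$ at the cardinality step, whereas you set $\varepsilon=\gamma^{2}/(3d)$ up front and use $\delta\ge\gamma$ in the volume step; the resulting bounds coincide.
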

\begin{proof}
	Let $\gamma\in (0,1)$ and choose $\delta\in (0,\pi)$ such that $V(\delta)=\gamma$. Let $\varepsilon\in (0,\delta)$ and by Lemma~\ref{lem:e-net} choose an $\varepsilon$-net $\cN_0\subset \Sp^d$ of cardinality at most $2(\frac{\pi}{\varepsilon})^d$. Then, for every $B(v,\delta)\in \cC_{\gamma}$ we find $w\in \cN_0$ such that $\varrho(v,w)\le \varepsilon$ and thus $B(w,\delta-\varepsilon)\subset B(v,\delta)$. Let
\[
\cN=\{B(w,\delta-\varepsilon)\colon w\in \cN_0\}.
\]
Then Lemma~\ref{lem:volume-bounds} (v) implies that
$$\frac{|B(v,\delta)|}{|B(w,\delta-\varepsilon)|} \leq \Big( \frac{\delta}{\delta-\varepsilon} \Big)^d.$$
Choosing $\varepsilon = \frac{\gamma\delta}{3 d}$, we observe that
$$\frac{|B(v,\delta)|}{|B(w,\delta-\varepsilon)|}
\leq \Big( \frac{1}{1-\frac{\gamma}{3d}} \Big)^d
=\Big( 1+\frac{\gamma}{3d(1-\frac{\gamma}{3d})} \Big)^d
\le \Big( 1+\frac{\gamma}{2d} \Big)^d
\le e^{\frac{\gamma}{2}}.$$
 This implies
$$|B(w,\delta-\varepsilon)| \geq \gamma e^{-\frac{\gamma}{2}}\ge \frac{\gamma}{2}.$$
Thus the family $\mathcal{N}$ is a $\frac{\gamma}{2}$-approximation for $\cC_{\gamma}$ of cardinality at most
$$|\mathcal{N}_0|\le 2\Big(\frac{\pi}{\varepsilon}\Big)^{d}
= 2\Big(\frac{3d\pi}{\delta\gamma}\Big)^{d}
\le  2\Big(\frac{3d\pi}{\gamma^2}\Big)^{d},
$$
where we used that $\delta\ge \gamma$ by Lemma~\ref{lem:volume-radius}.
\end{proof}

\begin{proof}[Proof of Proposition~\ref{pro:upper-net}]
 Set $\gamma=\frac{12d}{n}\ln n$. If $\gamma\geq 1$, then $\disp_{\cC}(n,d)\le \gamma$ trivially holds
and we are done. Assume $\gamma<1$. Then, using Lemmas~\ref{lemma} and \ref{lem:test-caps}, we have
\begin{equation*}
N_{\cC}(\gamma, d) \leq \frac{6\ln|\mathcal{N}|}{\gamma} \leq \frac{6\ln\Big(2\big(\frac{3\pi d}{\gamma^2}\big)^{d}\Big)}{\gamma}\\
\leq  \frac{6d\ln\Big(\frac{5\pi d}{\gamma^2}\Big)}{\gamma}\le \frac{n}{2}\cdot\frac{\ln(\frac{n^2}{d})}{\ln n}
\le n.
\end{equation*}
This implies $\disp_{\cC}(n,d)\le \gamma =\frac{12d}{n}\ln n$ as required.
\end{proof}

Next we construct an approximation for $\mathcal{L}_\gamma$.
The construction that will be used  is a natural extension of the one we used for caps in Lemma~\ref{lem:test-caps}, with the only difference
that now one has to control not only centers of caps, but also their radii. First we observe that by Lemma~\ref{lem:volume-radius}
caps forming an intersection of volume $\gamma$ have radii from the interval $(\gamma,\pi]$. We discretize this interval to approximate radii.
Then we construct a finite family of intersections $\mathcal{N}$ using the centers from a given $\varepsilon$-net
and radii from the first step which ensures that every lens of volume $\gamma$ contains a member of the family $\mathcal{N}$.
In the final step by choosing $\varepsilon$ in an appropriate way we ensure that the volume loss is sufficiently small.

\begin{lemma}
\label{gamma}
Let $d\geq 2$.
For every $\gamma \in (0,1)$ there exists a $\frac{\gamma}{2}$-approximation $\mathcal{N}$ for $\mathcal{L}_\gamma$ with cardinality
\[
|\mathcal{N}| \le 9\Big(\frac{12d\pi}{\gamma^2}\Big)^{2(d+1)}.
\]
\end{lemma}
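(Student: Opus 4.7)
The plan is to extend Lemma~\ref{lem:test-caps} by simultaneously discretizing the centers and the radii of the two caps forming each lens, as indicated in the paragraph preceding the statement. Fix a parameter $\varepsilon\in(0,\gamma/2)$ to be chosen at the end. By Lemma~\ref{lem:e-net} take a geodesic $\varepsilon$-net $\mathcal{N}_0\subset\Sp^d$ with $|\mathcal{N}_0|\le 2(\pi/\varepsilon)^d$, and set $R:=\{j\varepsilon : 1\le j\le \lfloor\pi/\varepsilon\rfloor\}$, so that $|R|\le \pi/\varepsilon$. The approximating family is
\[
\mathcal{N}:=\{B(w_1,r_1)\cap B(w_2,r_2): w_1,w_2\in\mathcal{N}_0,\ r_1,r_2\in R\},
\]
with cardinality at most $|\mathcal{N}_0|^2|R|^2\le 4(\pi/\varepsilon)^{2(d+1)}$.

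Given $A=B(v_1,\delta_1)\cap B(v_2,\delta_2)\in\mathcal{L}_\gamma$, Lemma~\ref{lem:volume-radius} guarantees $\delta_i\ge\gamma$ for $i=1,2$. Choose $w_i\in\mathcal{N}_0$ with $\varrho(v_i,w_i)\le\varepsilon$ and, since $\delta_i\ge\gamma\ge 2\varepsilon$, choose $r_i\in R$ with $\delta_i-2\varepsilon\le r_i\le\delta_i-\varepsilon$. The triangle inequality then yields $B(w_i,r_i)\subset B(v_i,\delta_i)$, hence $A':=B(w_1,r_1)\cap B(w_2,r_2)\in\mathcal{N}$ satisfies $A'\subset A$.

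The crux of the argument is the lower bound $|A'|\ge\gamma/2$. The inclusion $A\setminus A'\subset\bigcup_{i=1}^2\bigl(B(v_i,\delta_i)\setminus B(w_i,r_i)\bigr)$ gives
\[
|A|-|A'|\le\sum_{i=1}^2\bigl(V(\delta_i)-V(r_i)\bigr).
\]
Applying Lemma~\ref{lem:volume-bounds}(v) with $r_i\le\delta_i$ gives $V(r_i)\ge (r_i/\delta_i)^d V(\delta_i)\ge (1-2\varepsilon/\gamma)^d V(\delta_i)$, and Bernoulli's inequality bounds this from below by $(1-2d\varepsilon/\gamma)V(\delta_i)$. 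Since $V(\delta_i)\le 1$, we obtain $V(\delta_i)-V(r_i)\le 2d\varepsilon/\gamma$ and thus $|A|-|A'|\le 4d\varepsilon/\gamma$. Taking $\varepsilon:=\gamma^2/(12d)$ yields $|A|-|A'|\le\gamma/3$, so $|A'|\ge 2\gamma/3\ge\gamma/2$. The cardinality bound becomes $|\mathcal{N}|\le 4(12d\pi/\gamma^2)^{2(d+1)}\le 9(12d\pi/\gamma^2)^{2(d+1)}$, matching the claim.

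The only genuine obstacle I foresee is the volume comparison above: since a lens is not star-shaped with respect to any single point, there is no direct scaling argument linking $|A|$ and $|A'|$. The trick to sidestep this is to pass to the complement, replacing the intersection by a union, which reduces the bound to a single-cap volume estimate already provided by Lemma~\ref{lem:volume-bounds}(v).
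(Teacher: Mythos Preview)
Your proof is correct and follows essentially the same approach as the paper: discretize centers via a geodesic $\varepsilon$-net and radii via a grid of step $\varepsilon$, then control the volume loss by passing to the union of cap differences and applying Lemma~\ref{lem:volume-bounds}(v), all with the same choice $\varepsilon=\gamma^2/(12d)$. The only cosmetic difference is that you bound the ratio $(r_i/\delta_i)^d$ via Bernoulli's inequality, whereas the paper bounds the reciprocal $(\delta_i/r_i)^d$ by $e^{\gamma/5}$; your route is arguably a bit cleaner and even yields the slightly sharper constants $|A'|\ge 2\gamma/3$ and $|\mathcal N|\le 4(12d\pi/\gamma^2)^{2(d+1)}$.
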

\begin{proof}
	Fix $\gamma \in (0,1)$. Let $\varepsilon= \frac{\gamma^2}{12 d}$.
 By Lemma~\ref{lem:e-net} there exists an $\varepsilon$-net $\mathcal{N}_0\subset\Sp^d$ of cardinality at most $2\big(\frac{\pi}{\varepsilon}\big)^{d}$.
 By Lemma~\ref{lem:volume-radius}, we have $\delta_1,\delta_2\ge \gamma$ and we cover the interval $(\gamma, \pi]$ by sufficiently small subintervals
 in the following way:
$$(\gamma, \pi] \subset \bigcup_{i=1}^k \Big(\pi-i\varepsilon, \hspace{0.2cm} \pi - (i-1)\varepsilon\Big],$$
where $k \in \N$ is the smallest integer such that $\pi-k\varepsilon\leq \gamma$, i.e.,
$$k = \Big\lceil \frac{\pi-\gamma}{\varepsilon} \Big\rceil \leq \frac{\pi}{\varepsilon} + 1.$$
Consider now the family
\begin{align*}
\mathcal{N}=\{B(w_1, \pi-i\varepsilon) \cap B(w_2, \pi-j\varepsilon): w_1,w_2 \in \mathcal{N}_0, i,j=1,2,...,k+1\}.
\end{align*}
We show that $\mathcal{N}$ is  $\frac{\gamma}{2}$-approximation for  $\mathcal{L}_\gamma$ of required cardinality.
We first  estimate the cardinality. Since  $\varepsilon \leq \frac{\pi}{4}$,
\begin{align*}
|\mathcal{N}_0| \cdot (k+1) \leq 2\left(\frac{\pi}{\varepsilon}\right)^{d} \cdot \left(\frac{\pi}{\varepsilon}+2\right)
\le 3\left(\frac{\pi}{\varepsilon}\right)^{d+1}.
\end{align*}
Therefore,
$$|\mathcal{N}|= |\mathcal{N}_0\times \mathcal{N}_0| \cdot (k+1)^2 \leq  9\left(\frac{\pi}{\varepsilon}\right)^{2(d+1)} = 9\left(\frac{12\pi d}{\gamma^2}\right)^{2(d+1)}.$$

Next we prove that $\mathcal{N}$ is  $\frac{\gamma}{2}$-approximation.
Let $A=B(v_1,\delta_1)\cap B(v_2,\delta_2) \in \mathcal{L}_\gamma$.
 Then $i=1,2$ there exists $\ell_i \leq k$  such that
$$\delta_i \in \Big(\pi-\ell_i\varepsilon, \pi - (\ell_i-1)\varepsilon\Big]$$
and  there exist $w_i \in \mathcal{N}_0$ such that
$$B(w_i, \pi-(\ell_i+1)\varepsilon) \subset B(v_i,\pi-\ell_i\varepsilon)\subset B(v_i,\delta_i).$$
Since $\delta_i\ge \gamma$ by Lemma~\ref{lem:volume-radius}, we have
\begin{equation} \label{eq:pi-ell}
\pi-(\ell_i+1)\varepsilon\ge \delta_i -2\varepsilon =\delta_i - \frac{\gamma^2}{6 d} \ge \delta_i\Big(1-\frac{\gamma}{6 d}\Big)>0
\end{equation}
for $i=1,2$. Note that
$$B:=B(w_1,  \pi-(\ell_1+1)\varepsilon) \cap B(w_2, \pi-(\ell_2+1)\varepsilon) \in \mathcal{N}$$
 and $B \subset A$. Therefore,
\begin{align}
\label{eq11}
|B| \geq |A| - |B(v_1, \delta_1) \setminus B(w_1, \pi-(\ell_1+1)\varepsilon)| - |B(v_2, \delta_2) \setminus B(w_2, \pi-(\ell_2+1)\varepsilon)|.
\end{align}
 Using Lemma~\ref{lem:volume-bounds} (v) and \eqref{eq:pi-ell} we observe
\begin{align}
\label{alpha1}
\frac{|B(v_i, \delta_i)|}{|B(w_i, \pi-(\ell_i+1)\varepsilon)|}
\le  \Big( \frac{1}{1-\frac{\gamma}{6 d}} \Big)^{d}
=\Big( 1+\frac{\gamma}{d(6-\frac{\gamma}{d})} \Big)^{d}
\le \Big( 1+\frac{\gamma}{5d} \Big)^{d}
\le e^{\gamma/5}.
\end{align}
Since
$$|B(v_i, \delta_i) \setminus B(w_i, \pi-(\ell_i+1)\varepsilon)| = |B(v_i, \delta_i)| - |B(w_i, \pi-(\ell_i+1)\varepsilon)|,$$
 and since $|B(w_i, \pi-(\ell_i+1)\varepsilon)|\leq 1$, formula (\ref{alpha1}) implies
\begin{align*}
	|B(v_i,\delta_i) \setminus B(w_i, \pi-\ell_i\varepsilon)| \le e^{\gamma/5} - 1.
\end{align*}
Combining the above estimates with (\ref{eq11}) and using $|A|=\gamma$, and $e^{\gamma/5}\le 1+\frac{\gamma}{4}$ for $\gamma\in (0,1)$,
we obtain
\begin{align*}
	|B| \geq \gamma - 2(e^{\gamma/5}-1)
	\ge \frac{\gamma}{2}.
\end{align*}
 As $B\in \mathcal{N}$ and $B \subset A$, this completes the proof.
\end{proof}

\begin{proof}[Proof of Proposition~\ref{thm:upper-net-intersect}]
We proceed as in the proof of Proposition~\ref{pro:upper-net}. Set 
 $\gamma=\frac{24(d+1)}{n}\ln n$. If $\gamma\geq 1$, then clearly $\disp_{\cC}(n,d)\le 1\leq \gamma$. 
 Assume $\gamma<1$.
 By Lemma~\ref{gamma},  there exists
$\frac{\gamma}{2}$-approximation $\mathcal{N}$ for  $\mathcal{L}_\gamma$ of cardinality at most
$9(\frac{12\pi d}{\gamma^2})^{2(d+1)}$.
 Applying this with Lemma \ref{lemma} we obtain
\begin{equation}\label{eq:inverse-lens}
N_{\cL}(\gamma, d) 
\leq \frac{6\ln\Big(9\big(\frac{12\pi d}{\gamma^2}\big)^{2(d+1)}\Big)}{\gamma}
\le \frac{12(d+1)}{\gamma}\ln\Big(\frac{18\pi d}{\gamma^2}\Big)\le 
\frac{n}{2}\cdot\frac{\ln(\frac{n^2}{d})}{\ln n}\le n.
\end{equation}
This implies the desired bound.
\end{proof}

\begin{remark}
	In particular, when $\gamma < \frac{1}{d}$  we get
	$$N_{\cL}(\gamma, d)
\leq \frac{6(d+1)}{\gamma}\ln(18\pi d) + \frac{12(d+1)}{\gamma}\ln\Big(\frac{1}{\gamma}\Big)
\le \frac{18(d+1)}{\gamma}\ln\Big(\frac{18\pi}{\gamma}\Big),$$
which is of the same order as the bound given in \cite{PR24} using the combinatorial $\varepsilon$-net theorem.
\end{remark}

\begin{remark}
	By combining Lemma \ref{gamma} with Lemma \ref{lemma2} instead of Lemma \ref{lemma} we can improve the absolute constants in \eqref{eq:inverse-lens} but at the price of not having random choice of points on the sphere.
\end{remark}

\section{Dispersion bounds via $\varepsilon$-traversals}\label{sec:vc}

In this section we obtain Proposition~\ref{pro:upper-vc}  as a consequence of the combinatorial $\varepsilon$-net theorem.
To phrase it, let $\cX$ be a nonempty set equipped with a $\sigma$-algebra $\Sigma$. Moreover, let $\cR\subset \Sigma$
be a universally separable set system of measurable sets, meaning that there exists a countable subsystem $\cR_0\subset \cR$
with the following property: {\it each $R\in \cR$ can be approximated by a sequence $R_1,R_2,\dots$ in $\cR_0$ in the sense that
$x\in R$ if and only if $x\in R_i$ for all but finitely many indices $i$.} Note that the assumptions are satisfied in the case of $\Sp^d$
equipped with its Borel $\sigma$-algebra and for the choice of $\cR=\cC$ or $\cR=\cL_k$ since each cap can be approximated by
caps with rational centers and radii.

A suitable notion of dimension in the general framework was given by Vapnik–-Chervonenkis in \cite{VC15}, which is nowadays called  the VC-dimension. To introduce it,
first define the shatter function of $\cR$ by
\begin{equation}\label{shatter}
\Pi_{\cR}(k)=\max_{\substack{P_k\in \cX\\ |P_k|=k}}|\{P_k\cap R\colon R\in \cR\}|,
\end{equation}
which returns the maximal number of subsets of a $k$-subset of $\cX$ which arise as intersections with $R\in\cR$. The VC-dimension of $\cR$, given by
\[
\vdim(\cR)=\sup\{k\in\IN_0\colon \Pi_{\cR}(k)=2^k\},
\]
that is, the maximal cardinality of a set such that all its subsets can be realized as intersections with $R\in \cR$. The \textit{dispersion} of $P\subset\cX$ with respect to $\cR$ and a probability measure $\mu$ on $(X,\Sigma)$ is
\[
\disp(P,\cR,\mu)=\sup_{R\in \cR\colon R\cap P=\emptyset}\mu(R).
\]

The following probabilistic upper bound on the dispersion is essentially contained in \cite[Appendix~A]{BEH+89}.
Since  other notation, terminology, and concepts are used there, for the reader's convenience,
we present a proof at the end of this section.

\begin{thm}\label{thm:disp-vc}
Let $\mu$ be a probability measure on $(X,\Sigma)$. Assume that $d=\vdim(\cR)$ is finite and let $m\ge d$. Suppose that $X_1,\dots,X_m$ are distributed independently according to $\mu$. Then for all $\varepsilon>0$ one has
\[
\IP\Big[\disp(\{X_1,\dots,X_m\},\cR,\mu)>\varepsilon\Big]
\le 2\Big(\frac{2e m}{d}\Big)^{d} 2^{-\varepsilon m/2}.
\]
\end{thm}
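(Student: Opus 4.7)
The proof I would give is the classical argument behind the $\varepsilon$-net theorem of Vapnik--Chervonenkis and Haussler--Welzl, based on double sampling (symmetrization) combined with the Sauer--Shelah lemma. A preliminary remark: universal separability of $\cR$ allows us to replace the supremum in the definition of dispersion by a supremum over the countable subsystem $\cR_0$, which ensures measurability of the event
\[
A=\Big\{\disp(\{X_1,\dots,X_m\},\cR,\mu)>\varepsilon\Big\}.
\]
We may also assume $\varepsilon m$ is above a small absolute constant, since otherwise $2^{-\varepsilon m/2}\ge c>0$ and the bound $2(2em/d)^d 2^{-\varepsilon m/2}\ge 1$ is trivial.

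The plan is first to introduce an independent ghost sample $Y_1,\dots,Y_m\sim \mu$ and pass to the event
\[
B=\Big\{\exists\, R\in \cR:\ R\cap\{X_1,\dots,X_m\}=\emptyset\ \text{ and }\ |R\cap\{Y_1,\dots,Y_m\}|\ge \varepsilon m/2\Big\}.
\]
I would establish $\IP[A]\le 2\,\IP[B]$ as follows: conditioning on $A$ and selecting a witness $R^{\ast}\in \cR_0$ (so that $R^\ast$ is measurable with respect to $X_1,\dots,X_m$), the count $|R^{\ast}\cap\{Y_1,\dots,Y_m\}|$ is binomial with parameters $(m,\mu(R^\ast))$ and mean exceeding $\varepsilon m$. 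A Chebyshev-type estimate then gives that this count is at least $\varepsilon m/2$ with conditional probability at least $1/2$, yielding the factor $2$.

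Next comes the symmetrization-and-counting step. Conditioned on the unordered multiset $W=\{X_1,\dots,X_m,Y_1,\dots,Y_m\}$, the joint distribution is invariant under independently swapping each pair $(X_i,Y_i)$ with probability $1/2$. For a fixed $R\in \cR$, on the sub-event where $R$ contains at least $k\ge \varepsilon m/2$ of the $2m$ points of $W$, the conditional probability that all of those $k$ points end up among the $Y_i$'s is at most $2^{-k}\le 2^{-\varepsilon m/2}$. Since the event on $R$ depends only on the trace $R\cap W$, it suffices to take a union bound over the distinct traces. By the Sauer--Shelah lemma, for $m\ge d$ this number is bounded by
\[
\sum_{i=0}^{d}\binom{2m}{i}\le \Big(\frac{2em}{d}\Big)^{d}.
\]
Combining the three steps yields $\IP[A]\le 2\,\IP[B]\le 2(2em/d)^{d}\,2^{-\varepsilon m/2}$, as required.

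The main obstacle is the symmetrization inequality $\IP[A]\le 2\,\IP[B]$: the witness $R^\ast$ is data-dependent, so one must choose it measurably (this is exactly where universal separability enters) and then verify the Chebyshev lower bound uniformly in the conditioning. The remaining two ingredients, namely the conditional coin-flip argument on $W$ and the Sauer--Shelah count of the traces, are then routine.
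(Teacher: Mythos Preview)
Your proposal is correct and follows the same classical double-sampling argument as the paper: a ghost-sample symmetrization (the paper's Lemma~\ref{lem:Q-bound}), a conditional randomization over the combined $2m$-sample bounding the probability by $\Pi_{\cR}(2m)\,2^{-\varepsilon m/2}$ (Lemma~\ref{lem:J-bound}), and the Sauer--Shelah bound $\Pi_{\cR}(2m)\le (2em/d)^d$ (Lemma~\ref{lem:shatter-bound}). The only cosmetic differences are that the paper uses a binomial median estimate rather than Chebyshev in the first step (which cleanly gives the factor $2$ already for $m\varepsilon\ge 2$), and in the second step it averages over all $(2m)!$ permutations of the combined sample instead of the $2^m$ coordinatewise swaps you describe---both symmetrizations produce the same factor $2^{-\varepsilon m/2}$, and your phrase ``conditioned on the unordered multiset $W$'' should strictly speaking read ``conditioned on the full $2m$-tuple'' since the pair structure is needed for the swaps.
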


Note that if for a given $m\ge d$ we set $\varepsilon=\varepsilon_m=\frac{3}{\ln 2}\frac{d}{m}\ln(\frac{2e m}{d})$, then
the probability in   Theorem~\ref{thm:disp-vc} becomes strictly less than 1, which leads to existence of
 an $m$-point set on $\cX$ with dispersion at most $\varepsilon_m$. Thus, we only need to have good bounds on
 the VC-dimension of the class under consideration. For the class of caps it is given by the following lemma.

\begin{lem}\label{vcdimcap}
Let $d\geq 2$.
The class $\mathcal{C}=\mathcal{C}_d$ of caps on $\Sp^d\subset \IR^{d+1}$ satisfies $\vdim(\mathcal{C}_d)=d+2$.
\end{lem}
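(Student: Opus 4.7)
The plan is to prove the two inequalities $\vdim(\cC_d)\leq d+2$ and $\vdim(\cC_d)\geq d+2$ separately, using the well-known identification of spherical caps with restrictions of affine half-spaces to $\Sp^d$.

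For the upper bound, I would begin with the observation that every cap can be written as
\[
B(x,\varphi)=\Sp^d\cap \{y\in\IR^{d+1}\colon \langle y,x\rangle \ge \cos \varphi\},
\]
so $\cC_d$ is contained in (indeed equal to) the family of traces on $\Sp^d$ of closed affine half-spaces of $\IR^{d+1}$. Hence it suffices to argue that the VC-dimension of the family of closed affine half-spaces of $\IR^{d+1}$ is at most $d+2$. This is the standard application of Radon's theorem: any set of $d+3$ points in $\IR^{d+1}$ admits a partition into two nonempty subsets whose convex hulls intersect, and such a partition cannot be realized by any half-space. Therefore no $d+3$ points on the sphere can be shattered, which gives $\vdim(\cC_d)\le d+2$.

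For the lower bound I would exhibit an explicit shattered set. Take $v_1,\dots,v_{d+2}\in\Sp^d$ to be the vertices of a regular $(d+1)$-simplex inscribed in $\Sp^d$; they are affinely independent in $\IR^{d+1}$. Let $S$ be any subset of $\{v_1,\dots,v_{d+2}\}$. Affine independence forces the convex hulls $\conv(S)$ and $\conv(\{v_1,\dots,v_{d+2}\}\setminus S)$ to be disjoint (a common point would yield a nontrivial affine dependence among the $v_i$); by the separating hyperplane theorem applied to these two compact convex sets we obtain a closed half-space $H\subset\IR^{d+1}$ whose intersection with $\{v_1,\dots,v_{d+2}\}$ is exactly $S$. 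Intersecting $H$ with $\Sp^d$ produces a cap in $\cC_d$ realizing the trace $S$. Since all $2^{d+2}$ subsets are realized, $\vdim(\cC_d)\ge d+2$.

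There is no real obstacle here: both halves are classical facts about half-spaces transferred to the sphere via the cap/half-space correspondence. The only point that deserves a brief check is that when we write a cap as $\Sp^d\cap H$ with $H=\{y:\langle y,x\rangle\ge c\}$ we must allow $c\in[-1,1]$ and also the degenerate cases $H\cap\Sp^d=\Sp^d$ or $\emptyset$, which are already in $\cC_d$; these boundary cases do not affect the VC-dimension computation. Combining the two inequalities yields $\vdim(\cC_d)=d+2$.
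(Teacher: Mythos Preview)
Your argument is correct and is the standard route: identify caps with traces of closed affine half-spaces of $\IR^{d+1}$, use Radon's theorem for the upper bound, and shatter the vertices of an inscribed regular simplex via hyperplane separation for the lower bound. The paper itself does not prove this lemma at all; it simply cites \cite[Prop.~5.12]{BL15} (and \cite[Prop.~8]{ABD12} for $d=2$). So there is nothing to compare against---your write-up supplies a self-contained proof where the paper only gives a reference.

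One minor inaccuracy worth fixing: you assert that the degenerate cases $H\cap\Sp^d=\Sp^d$ and $H\cap\Sp^d=\emptyset$ ``are already in $\cC_d$''. With the paper's convention $B(x,\varphi)$, $\varphi\in[0,\pi]$, the full sphere is indeed a cap ($\varphi=\pi$), but the empty set is not (even $\varphi=0$ gives a singleton). This does not damage the argument. For the upper bound you only use the inclusion $\cC_d\subset\{\text{half-space traces on }\Sp^d\}$, which is unaffected. For the lower bound, the separating half-space you produce for a nonempty proper subset $S$ always meets $\Sp^d$ in a genuine cap (it contains the points of $S$ and misses those of the complement, so its trace on $\Sp^d$ is neither empty nor full); the empty subset $S=\emptyset$ is realized by any small cap avoiding all the $v_i$, and $S=\{v_1,\dots,v_{d+2}\}$ by the full sphere. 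So the conclusion $\vdim(\cC_d)=d+2$ stands.
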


 For a short proof of Lemma~\ref{vcdimcap} we refer to \cite[Prop.~5.12]{BL15} (note that there  the notation
 $\Sp^n$ is used for $(n-1)$-dimensional sphere in $\R^n$ and that this fact for $d=2$ was proved
 in  \cite[Prop.~8]{ABD12}, where the authors used the definition of VC-dimension which differs by 1
 from the standard one).
 To obtain an upper bound on the dispersion
 in the case of intersections of caps one can use Lemma~3.2.3 in \cite[Lemma~3.2.3]{BEH+89},
 which provides an estimate on the VC-dimension (it implies that the class $\cL_k$, consisting of intersections
 of  at most $k$ spherical caps, has VC-dimension at most $2 (d+2) k \log_2 (3k)$).

\begin{proof}[Proof of Proposition~\ref{pro:upper-vc}]
	Let $X=\Sp^d$ (equipped with its Borel $\sigma$-algebra), $\cR=\cC$ and $\mu=\sigma$.
Since by Lemma~\ref{vcdimcap}  $\vdim(\mathcal{C}_d)=d+2$, Theorem~\ref{thm:disp-vc} yields that for every
$m\ge d+2$ there exists an $m$-point set $P_m\subset \Sp^d$ with
$$\disp_{\cC}(P_m)\le \varepsilon_m:=\frac{3}{\ln 2}\, \frac{d+2}{m}\, \ln\Big(\frac{2e m}{d+2}\Big).$$
In particular, this gives $\disp_{\cC}(m,d)\le \varepsilon_m$ and completes the proof.
\end{proof}

\begin{proof}[Proof of Theorem~\ref{newthm} and inequality~(\ref{eq:prochno-rudolf})]
 We repeat the proof above (and essentially follow \cite{PR24}).
 Using that $\vdim(\mathcal{C}_d)=d+2$ and \cite[Lemma~3.2.3]{BEH+89}, we observe that
 $\vdim(\cL_k)\leq 2 (d+2) k \log_2 (3k)$, in particular,
 $\vdim(\cL_2)=\vdim(\cL)\leq 11(d+2)$.
  Therefore  Theorem~\ref{thm:disp-vc} implies
 \[
 \disp_{\cL_k}(m,d) \le \frac{6}{\ln 2}\, \frac{(d+2) k \log_2 (3k)}{m}\, \ln\Big(\frac{e m}{ (d+2) k \log_2 (3k)}\Big).
\]
for every $m\ge 2 (d+2) k \log_2 (3k)$ (in the case of $\cL$, that is, in the case $k=2$, it is enough to ask $m\ge 11(d+2)$).
\end{proof}

\begin{remark}
Note that	Theorem~\ref{thm:disp-vc} yields the existence of an absolute constant $C>1$ such that for every $\varepsilon\in (0,\frac{1}{2})$
there exists a point set $P_n\subset \cX$ of cardinality $$n\le \frac{C\vdim(\cR)}{\varepsilon}\ln\frac{1}{\varepsilon}$$ with
$\disp(P_n,\cR,\mu)<\varepsilon$. It was shown in \cite[Theorem 2.1]{KPW92}  that in general this bound on $n$ is sharp up to
an absolute  constant  and in \cite[Theorem 3.1]{KPW92} that for sufficiently small $\varepsilon>0$ (depending on $\vdim(\cR)$)
one may take $C$ arbitrarily close to 1. Note however that for the proof of Proposition~\ref{pro:upper-vc} we require bounds
valid for all $\varepsilon\in (0,\frac{1}{2})$.
\end{remark}

\begin{remark}
The notion of VC-dimension can also be used to bound the size of empirical processes or (spherical cap) discrepancy, see \cite{HNWW01,Tal94}.
\end{remark}

The remainder of this section is devoted to the proof of Theorem~\ref{thm:disp-vc}, which is essentially taken from \cite[Appendix~A]{BEH+89} and \cite{HW87}.

A set $P\subset \cX$ is an \textit{$\varepsilon$-traversal} of $\cR$ if $R\cap P\neq \emptyset$ for all
$$
 R\in \cR_{\varepsilon}:=\{R\in \cR\colon \mu(R)> \varepsilon\}.
$$
This definition is more general than the one of (combinatorial) $\varepsilon$-nets introduced in \cite{HW87}.
Theorem~\ref{thm:disp-vc} provides an upper bound on the minimal cardinality of $\varepsilon$-traversals and
 thus  generalizes  the corresponding result for $\varepsilon$-nets in \cite{HW87}.

Given $\varepsilon>0$ and $m\ge 1$ consider the set
\[
Q^m_{\varepsilon}
=\{(x_1,\dots,x_m)\in \cX^m\colon \exists R\in \cR_{\varepsilon}\,\, \mbox{ with } \,\,  R\cap\{x_1,\dots,x_m\}=\emptyset\}
\]
of realizations of $\{X_1,\dots,X_m\}$ not forming an $\varepsilon$-traversal. Then the probability that $\{X_1,\dots,X_m\}$ has dispersion $>\varepsilon$ is given by $\mu^{\otimes m}(Q^m_{\varepsilon})$, where $\mu^{\otimes m}$ denotes the product measure on $X^m$. Also define the set
\begin{align*}
	J^{2m}_{\varepsilon}=\Big\{&(\bar{x},\bar{y})=(x_1,\dots,x_m,y_1,\dots,y_m)\in \cX^{2m}\colon \\
&\exists R\in \cR_{\varepsilon}\, \mbox{ such that } \,  R\cap \{x_1,\dots,x_m\}=\emptyset
\, \mbox{ and } \, |R\cap \{y_1,\dots,y_m\}|\ge\frac{\varepsilon m}{2}\Big\}.
\end{align*}

\begin{lem}\label{lem:Q-bound}
Let  $\varepsilon>0$ and $m\ge 1$ be such that $m\eps \geq 2$. Then
\[
\mu^{\otimes 2m}(J^{2m}_{\varepsilon})
\le \mu^{\otimes m}(Q^m_{\varepsilon})
\le 2 \mu^{\otimes 2m}(J^{2m}_{\varepsilon}).
\]
\end{lem}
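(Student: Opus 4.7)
The plan is to treat this as a standard symmetrization (``ghost sample'') argument: the left inequality is immediate from an inclusion and Fubini, while the right one reduces to a one-sided lower-tail estimate for a binomial random variable.

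For the left inequality I would just observe that $J^{2m}_\varepsilon\subset Q^m_\varepsilon\times\cX^m$. Indeed, if $(\bar x,\bar y)\in J^{2m}_\varepsilon$ is witnessed by some $R\in\cR_\varepsilon$, then $R\cap\{x_1,\dots,x_m\}=\emptyset$ and $\mu(R)>\varepsilon$, so already $\bar x\in Q^m_\varepsilon$. Fubini then yields
\[
\mu^{\otimes 2m}(J^{2m}_\varepsilon)\le \mu^{\otimes 2m}(Q^m_\varepsilon\times \cX^m)=\mu^{\otimes m}(Q^m_\varepsilon).
\]

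For the right inequality I would condition on $\bar x$. The plan is to measurably assign to each $\bar x\in Q^m_\varepsilon$ a set $R(\bar x)\in\cR_\varepsilon$ that avoids $\{x_1,\dots,x_m\}$. Here universal separability enters: enumerating $\cR_0=\{R_1,R_2,\dots\}$, any witnessing $R$ can be approximated by elements of $\cR_0$, and dominated convergence applied to $\mathbf{1}_{R_i}$ shows that some $R_j\in\cR_0$ still satisfies $\mu(R_j)>\varepsilon$ and $R_j\cap\{x_1,\dots,x_m\}=\emptyset$. Setting $R(\bar x):=R_{j(\bar x)}$ with $j(\bar x)$ the smallest such index gives a measurable selection. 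Conditionally on $\bar x$, the variable $Z:=|R(\bar x)\cap\{Y_1,\dots,Y_m\}|$ is then $\mathrm{Bin}(m,p)$-distributed with $p:=\mu(R(\bar x))>\varepsilon$, and the event $\{Z\ge \varepsilon m/2\}$ forces $(\bar x,\bar y)\in J^{2m}_\varepsilon$. Hence by Fubini it suffices to show $\IP[Z\ge \varepsilon m/2]\ge 1/2$ uniformly in admissible $\bar x$.

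For this binomial estimate my plan is to use the classical fact that the median of $\mathrm{Bin}(m,p)$ lies in $\{\lfloor mp\rfloor,\lceil mp\rceil\}$, so $\IP[Z\ge\lfloor mp\rfloor]\ge 1/2$. Since $mp>m\varepsilon\ge 2$, we have $\lfloor mp\rfloor\ge mp-1\ge mp/2\ge \varepsilon m/2$, which yields the desired bound; this is precisely where the hypothesis $m\varepsilon\ge 2$ is consumed. The only mildly subtle point in the whole argument is the measurable selection of $R(\bar x)$, which I expect to be the main (mostly notational) obstacle; it is exactly what the universal separability assumption on $\cR$ was imposed to handle.
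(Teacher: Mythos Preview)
Your proof is correct and follows essentially the same symmetrization argument as the paper, including the same binomial/median estimate for the inner bound. One small simplification the paper makes: it avoids the measurable-selection issue entirely by bounding the inner integral $\int_{\cX^m}\mathbf{1}_{J^{2m}_\varepsilon}(\bar x,\bar y)\,\dd\mu^{\otimes m}(\bar y)$ pointwise in $\bar x$ --- for each fixed $\bar x\in Q^m_\varepsilon$ one may pick \emph{any} witness $R_{\bar x}$ (not measurably in $\bar x$) to show this integral is at least $1/2$, and then integrate over $Q^m_\varepsilon$; universal separability is only needed to ensure $Q^m_\varepsilon$ and $J^{2m}_\varepsilon$ are themselves measurable.
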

\begin{proof}
	If $\varepsilon\ge 1$, then $\cR_{\varepsilon}=\emptyset$ and the statement is trivial.
Thus assume $\varepsilon\in (0,1)$. The first inequality follows since $J^{2m}_{\varepsilon}$ contains a stronger condition
and by Fubini's theorem.

To prove the second bound, note that by Fubini's theorem,
\begin{align*}
\mu^{\otimes 2m}(J^{2m}_{\varepsilon})
&=\int_{\cX^m}\int_{\cX^m}\mathbf{1}_{J^{2m}_{\varepsilon}}(\bar{x},\bar{y})\dd\mu^{\otimes m}(\bar{y})\dd\mu^{\otimes m}(\bar{x})\\
&\ge \int_{Q^m_{\varepsilon}}\int_{\cX^m}\mathbf{1}_{J^{2m}_{\varepsilon}}(\bar{x},\bar{y})\dd\mu^{\otimes m}(\bar{y})\dd\mu^{\otimes m}(\bar{x}).
\end{align*}
We estimate the inner integral uniformly from below. Fix $\bar{x}\in Q^m_{\varepsilon}$.
Then there exists $R\in \cR_{\varepsilon}$ such that $R\cap \{x_1,\dots,x_m\}=\emptyset$.
We fix  one such $R$ and denote it by $R_{\bar{x}}$.
Let $K^{2m}_{\varepsilon}=K^{2m}_{\varepsilon}(\bar{x})$ be the set of $(\bar{x},\bar{y})\in \cX^{2m}$ with $|\{y_1,\dots,y_m\}\cap R_{\bar{x}}|\ge \varepsilon m/2$. Then $K^{2m}_{\varepsilon}\subset J^{2m}_{\varepsilon}$ and hence
\begin{align*}
	\int_{\cX^m}\mathbf{1}_{J^{2m}_{\varepsilon}}(\bar{x},\bar{y})\dd\mu^{\otimes m}(\bar{y})
	&\ge \int_{\cX^m}\mathbf{1}_{K^{2m}_{\varepsilon}}(\bar{x},\bar{y})\dd\mu^{\otimes m}(\bar{y})\\
	&= \int_{\cX^m}\mathbf{1}_{\{|\{y_1,\dots,y_m\}\cap R_{\bar{x}}|\ge \varepsilon m/2\}}(\bar{y})\dd\mu^{\otimes m}(\bar{y}).
\end{align*}
The latter integral is  identical to the probability that $\sum_{i=1}^{m}\xi_i\ge \varepsilon m/2$, where $\xi_i=\mathbf{1}_{Y_i\in R_{\bar{x}}}$ and $Y_1,\dots,Y_m$
are independently distributed according to $\mu$. Then the random variables $\xi_1,\dots,\xi_m$ are independent Bernoulli random variables distributed with paramater
$p=\mu(R_{\bar{x}})>\varepsilon$. We set $p=\varepsilon$ which only decreases the above probability.
Since any median $M$ of the Binomial distribution with   parameters $m, \eps$ satisfies $M\geq m\eps -1$,
we observe that for $m\geq 2/\eps$,
\[
\IP\Big[\sum_{i=1}^{m}\xi_i\ge \varepsilon m/2\Big] \ge \frac{1}{2}.
\]
Therefore,
\[
\mu^{\otimes 2m}(J^{2m}_{\varepsilon})
\ge \frac{1}{2}\int_{Q^m_{\varepsilon}}\dd\mu^{\otimes m}(\bar{x})
=\frac{1}{2}\mu(Q^m_{\varepsilon}),
\]
which proves the lemma.
\end{proof}

We need another lemma. Recall that $\Pi_{\cR}$ was defined in (\ref{shatter}).

\begin{lem}\label{lem:J-bound}
	For all $\varepsilon>0$ and $m\ge 1$,
$$\mu^{\otimes 2m}(J^{2m}_{\varepsilon})\le \Pi_{\cR} (2m) \, 2^{-\varepsilon m/2}.$$
\end{lem}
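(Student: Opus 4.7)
The plan is a classical symmetrization argument from Vapnik--Chervonenkis theory, combining the invariance of $\mu^{\otimes 2m}$ under coordinate swaps with the fact that $\cR$ induces only $\Pi_{\cR}(2m)$ distinct traces on any $2m$-point subset of $\cX$. The idea is to convert the measure-theoretic event defining $J^{2m}_{\varepsilon}$ into a combinatorial counting problem on a random binary vector.

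First I would introduce independent Bernoulli$(1/2)$ random variables $\sigma_1,\dots,\sigma_m$, independent of $(\bar{x},\bar{y})$, and define the swapped sample $(\bar{x}^\sigma,\bar{y}^\sigma)$ by setting $(x_i^\sigma,y_i^\sigma)=(x_i,y_i)$ if $\sigma_i=0$ and $(x_i^\sigma,y_i^\sigma)=(y_i,x_i)$ if $\sigma_i=1$. Since the $2m$ coordinates are i.i.d.\ under $\mu^{\otimes 2m}$, the swapped vector has the same distribution as the original, hence
\[
\mu^{\otimes 2m}(J^{2m}_{\varepsilon}) \,=\, \IE\Bigl[\IP\bigl[(\bar{x}^\sigma,\bar{y}^\sigma)\in J^{2m}_{\varepsilon}\,\bigm|\,(\bar{x},\bar{y})\bigr]\Bigr],
\]
which reduces the task to a pointwise upper bound on the inner conditional probability.

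Next, I would fix $(\bar{x},\bar{y})$ and observe that the event $(\bar{x}^\sigma,\bar{y}^\sigma)\in J^{2m}_{\varepsilon}$ depends on $R\in \cR_{\varepsilon}$ only through the trace $T=R\cap\{x_1,\dots,x_m,y_1,\dots,y_m\}$. By definition of the shatter function, the number of distinct traces arising as $R$ varies is at most $\Pi_{\cR}(2m)$, so a union bound reduces the estimate to controlling, for each fixed $T$, the $\sigma$-probability that after swapping the set $T$ is disjoint from $\{x_i^\sigma\}_{i\le m}$ and meets $\{y_i^\sigma\}_{i\le m}$ in at least $\varepsilon m/2$ points.

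Finally, I would perform a case analysis on each pair $(x_i,y_i)$ according to how many of its two members belong to $T$. If some pair has both members in $T$, one of them necessarily stays on the $x^\sigma$-side after the swap, and the event fails deterministically. Otherwise, letting $k$ denote the number of pairs meeting $T$ in exactly one point, the event forces a specific value of $\sigma_i$ on each of these $k$ indices (namely the one that sends the lone $T$-point to the $y^\sigma$-side), an event of probability exactly $2^{-k}$ by independence; moreover, the cardinality condition on $|T\cap\{y_i^\sigma\}|$ forces $k\ge \varepsilon m/2$, so each trace contributes at most $2^{-\varepsilon m/2}$. Combining with the $\Pi_{\cR}(2m)$-fold union bound and integrating over $(\bar{x},\bar{y})$ yields the lemma. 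The main technical step is this per-pair case analysis, but it amounts to a straightforward counting, so I do not expect a serious obstacle.
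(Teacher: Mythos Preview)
Your proof is correct and follows the same symmetrization-plus-trace-union-bound scheme as the paper; the only difference is the choice of randomization: you use independent pairwise swaps $(x_i,y_i)\leftrightarrow(y_i,x_i)$, whereas the paper averages over the full symmetric group on the $2m$ coordinates and obtains the per-trace bound $\binom{m}{\ell}/\binom{2m}{\ell}\le 2^{-\ell}$ in place of your direct $2^{-k}$. Both are standard variants of the Vapnik--Chervonenkis argument and yield the identical final inequality.
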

\begin{proof}
A permutation $\pi$ of $\{1,\dots,2m\}$ acts on $\bar{z}=(z_1,\dots,z_{2m})\in \cX^{2m}$ by $\pi(\bar{z})=(z_{\pi(1)},\dots,z_{\pi(2m)})$ and satisfies $\pi\circ \mu^{\otimes 2m}=\mu^{\otimes 2m}$. Averaging over all permutations yields
\begin{align*}
\mu^{\otimes 2m}(J^{2m}_{\varepsilon})
&=\int_{\cX^{2m}}\mathbf{1}_{J^{2m}_{\varepsilon}}(\bar{z})\dd\mu^{\otimes 2m}(\bar{z})\\
&=\int_{\cX^{2m}}\Big(\frac{1}{(2m)!}\sum_{\pi}\mathbf{1}_{J^{2m}_{\varepsilon}}(\pi(\bar{z}))\Big)\dd\mu^{\otimes 2m}(\bar{z}),
\end{align*}
where the sum is taken over all $(2m)!$ such permutations. We provide a uniform upper bound on  the integrand.

Fix $\bar{z}=(z_1,\dots,z_{2m})\in \cX^{2m}$ and let $Z=\{z_1,\dots,z_{2m}\}$.
Recall that $\pi(\bar{z})\in J_{\varepsilon}^{2m}$ if and only if there is $R\in \cR_{\varepsilon}$ such that
$R\cap \{z_{\pi(1)},\dots,z_{\pi(m)}\}=\emptyset$ and $|R\cap \{z_{\pi(m+1)},\dots,z_{\pi(2m)}\}|\ge \frac{\varepsilon m}{2}$.
In this case we write $\pi\leftrightarrow R$. Without loss of generality, we replace $\cR_{\varepsilon}$ by the smaller set
system
$$\cR_{\varepsilon}|_{\bar{z}}=\{R\cap \{z_{1},\dots,z_{2m}\}\colon R\in \cR_{\varepsilon}\}.$$

Given $R\in \cR_{\varepsilon}|_{\bar{z}}$ define the set of permutations $S_R=\{\pi\colon \pi \leftrightarrow R\}$.
Then $\pi(\bar{z})\in J_{\varepsilon}^{2m}$ if and only if $\pi\in S_R$ for some (possibly non-unique) $R\in \cR_{\varepsilon}|_{\bar{z}}$. This gives
\[
\frac{1}{(2m)!}\sum_{\pi}\mathbf{1}_{J^{2m}_{\varepsilon}}(\pi(\bar{z}))
\le \frac{1}{(2m)!}\sum_{\pi}\sum_{R\in  \cR_{\varepsilon}|_{\bar{z}}}\mathbf{1}_{\pi\in S_R}
=\sum_{R\in  \cR_{\varepsilon}|_{\bar{z}}}\frac{1}{(2m)!}\sum_{\pi} \mathbf{1}_{\pi\in S_R}.
\]

Let $R\in  \cR_{\varepsilon}|_{\bar{z}}$. If $S_R=\emptyset$, omit $R$ from the first sum. If $S_R\neq \emptyset$, then there exists a subset of distinct indices
$I_R=\{i_1,\dots,i_{\ell}\}$ such that $\frac{\varepsilon m}{2}\le \ell\le m$ and $\{z_1,\dots,z_{2m}\}\cap R=\{z_{i_1},\dots,z_{i_{\ell}}\}$ and $z_i\not\in R$ for $i\not\in I_R$.  Note that for a permutation $\pi$, $\pi\in S_R$ if and only if $\pi^{-1}(i_1),\dots,\pi^{-1}(i_{\ell})\in \{m+1,\dots,2m\}$. The fraction of permutations satisfying this can be computed by noting that there are $\binom{m}{\ell}$ ways to map $I_R$ into $\{m+1,\dots,2m\}$ but $\binom{2m}{\ell}$ ways to map $I_R$ into $\{1,\dots,2m\}$. Therefore,
\[
\frac{1}{(2m)!}\sum_{\pi} \mathbf{1}_{\pi\in S_R}
=\frac{\binom{m}{\ell}}{\binom{2m}{\ell}}
=\frac{m(m-1)\cdots (m-\ell+1)}{2m(2m-1)\cdots (2m-\ell+1)}
\le 2^{-\ell}
\le 2^{-\varepsilon m/2}.
\]
We complete the proof by noting that
\[
|R_{\varepsilon}|_{\bar{z}}|\le|\{R\cap \{z_1,\dots,z_{2m}\}\colon R\in \cR\}|\le \Pi_{\cR}(2m).
\]
\end{proof}

These two lemmas yield that for every $\varepsilon>0$ and $m\ge 1$ with $m\varepsilon\ge 2$,
\[
\mu^{\otimes m}(Q^m_{\varepsilon})
\le 2\mu^{\otimes 2m}(J^{2m}_{\varepsilon})
\le 2\Pi_{\cR}(2m)2^{-\varepsilon m/2}.
\]

It remains to bound the shatter function in terms of the VC-dimension.
It is done by the  Sauer-Shelah lemma, which goes back to independent works by
 Vapnik and Chervonenkis, by  Sauer, and by  Shelah, see the references in \cite{Mat99} related to
 Lemma~5.9 there.

\begin{lem}\label{lem:shatter-bound}
Let $\cR\subset \Sigma$ be a set system with $\vdim(\cR)\le d$. Then for $m\geq 1$,
\[
\Pi_{\cR}(m)\le \Phi_d(m):=\sum_{k=0}^{d}\binom{m}{k}.
\]
\end{lem}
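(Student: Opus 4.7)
The plan is to prove Lemma~\ref{lem:shatter-bound} by the standard induction on the size $m$ of the ground set, following the classical argument of Sauer, Shelah, and Vapnik--Chervonenkis. I will fix a finite set $P=\{p_1,\dots,p_m\}\subset \cX$ realizing $\Pi_{\cR}(m)$ and write $\cR|_P=\{R\cap P\colon R\in \cR\}$ for the trace; it then suffices to bound $|\cR|_P|$. The base case $m=0$ is immediate since $\cR|_P\subset\{\emptyset\}$, and for $m\le d$ one has $|\cR|_P|\le 2^m\le \Phi_d(m)$, so we may assume $m>d\ge 1$.

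For the inductive step I single out the last point $p_m$, set $P'=P\setminus\{p_m\}$, and define two set systems on $P'$ by
\[
\cR_1=\{S\cap P'\colon S\in \cR|_P\},\qquad
\cR_2=\{S\subset P'\colon S\in \cR|_P\text{ and }S\cup\{p_m\}\in \cR|_P\}.
\]
Every $S'\in\cR_1$ has one or two preimages in $\cR|_P$, namely $S'$ and possibly $S'\cup\{p_m\}$, with the two-preimage case occurring precisely when $S'\in\cR_2$, so $|\cR|_P|=|\cR_1|+|\cR_2|$. Clearly $\vdim(\cR_1)\le d$, since any $T\subset P'$ shattered by $\cR_1$ is also shattered by $\cR|_P$. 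The key point is that $\vdim(\cR_2)\le d-1$: if some $T\subset P'$ were shattered by $\cR_2$, then $T\cup\{p_m\}$ would be shattered by $\cR|_P$, because for every $U\subset T$ one can choose $S\in\cR_2$ with $S\cap T=U$, and then $S\cap(T\cup\{p_m\})=U$ while $(S\cup\{p_m\})\cap(T\cup\{p_m\})=U\cup\{p_m\}$, contradicting $\vdim(\cR)\le d$.

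Applying the induction hypothesis to $\cR_1$ and $\cR_2$ on the $(m-1)$-point set $P'$ gives $|\cR_1|\le \Phi_d(m-1)$ and $|\cR_2|\le \Phi_{d-1}(m-1)$. Pascal's identity $\binom{m}{k}=\binom{m-1}{k}+\binom{m-1}{k-1}$ summed over $k=0,\dots,d$ yields $\Phi_d(m)=\Phi_d(m-1)+\Phi_{d-1}(m-1)$, hence $|\cR|_P|\le \Phi_d(m)$, completing the induction. The only genuinely nontrivial point is verifying the VC-dimension drop for $\cR_2$; everything else is bookkeeping, and there are no measure-theoretic subtleties since the entire argument operates on the finite trace $\cR|_P$.
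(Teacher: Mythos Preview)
Your argument is the standard Sauer--Shelah induction and is correct. Note, however, that the paper does not actually supply a proof of Lemma~\ref{lem:shatter-bound}: it merely states the result and refers the reader to \cite{Mat99} (Lemma~5.9 there) for the classical proof. So you have written out precisely the argument the paper chose to cite rather than reproduce.

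One small point of housekeeping: you phrase the induction as being on $m$ alone and then write ``we may assume $m>d\ge 1$'', but in the step you invoke the bound $|\cR_2|\le \Phi_{d-1}(m-1)$, which for $d=1$ requires the statement at VC-dimension $0$. Either add the easy base case $d=0$ (if $\vdim(\cR)\le 0$ then any two sets in $\cR|_P$ would shatter a point in their symmetric difference, so $|\cR|_P|\le 1=\Phi_0(m)$), or recast the induction as being on $m+d$, which is how the argument is usually presented. With that adjustment the proof is complete.
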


\begin{proof}[Proof of Theorem~\ref{thm:disp-vc}]
Let $m\ge d$ and $\varepsilon>0$. If $m\varepsilon<2$, then the bound on the probability is trivially satisfied. Thus, let $m\varepsilon\ge 2$. We bound the sum in Lemma~\ref{lem:shatter-bound} using the standard bound
\[
 \sum_{k=0}^{d}\binom{m}{k}\leq \Big(\frac{me}{d}\Big)^{d},
\]
which follows from
\[
e^d
\ge \Big(1+\frac{d}{m}\Big)^m
\ge \sum_{k=0}^{d}\Big(\frac{d}{m}\Big)^{k}\binom{m}{k}
\ge \Big(\frac{d}{m}\Big)^{d}\sum_{k=0}^{d}\binom{m}{k}.
\]
Combined with Lemmas~\ref{lem:Q-bound}, \ref{lem:J-bound} and \ref{lem:shatter-bound},
we obtain
\[
\mu^{\otimes m}(Q^m_{\varepsilon})
\le 2\Big(\frac{2 e m}{d}\Big)^{d}2^{-\varepsilon m/2}.
\]
This completes the proof.
\end{proof}

%

\subsection*{Acknowledgement}

This research was funded in whole or in part by the Austrian Science Fund (FWF)
[Grant DOI: 10.55776/P32405; 10.55776/J4777]. For open access purposes, the authors have applied a CC BY
public copyright license to any author-accepted manuscript version arising from this submission.

\bibliographystyle{abbrv}
\bibliography{dispersion}

\end{document}